\newcommand{\overbar}[1]{\,\overline{\!{#1}}}
\definecolor{myblue}{RGB}{39, 106, 179}
\definecolor{myred}{RGB}{190, 1, 25}
\definecolor{myyellow}{RGB}{255, 173, 1}
\newtheorem{thm}{Theorem}[section]
\newtheorem{lem}[thm]{Lemma}
\newtheorem{prop}[thm]{Proposition}
\newtheorem{defn}[thm]{Definition}
\theoremstyle{remark}
\newcommand{\supp}{\mathrm{supp}}
\newcommand{\mcE}{\mathcal{E}}
\newcommand{\mcA}{\mathcal{A}}
\title{Formation of multiple flocks in a simple nonlocal aggregation model}
\author{Carrie Clark}\thanks{This work was supported in part by NSERC Discovery Grant \#311685} 
\begin{document}

\maketitle

\begin{abstract}
We consider a family of interaction energies given by kernels having a ``well-barrier" shape, and investigate how these kernels drive the formation of multiple flocks within a larger population. We show that the energy is minimized by a sequence of indicators of finitely many balls whose supports become infinitely far apart from one another. The dichotomy case of the concentration compactness principle is a key ingredient in our proof. 

\

\noindent Keywords: aggregation, global minimizers, nonlocal interaction energies.
\end{abstract}

\section{Introduction}

The study of aggregation phenomena in biology (see for example \cite{Bernoff_Topaz_2013}\cite{Topaz_Bertozzi_Lewis_2006}) and physics (see for example \cite{Wales_2010}\cite{Holm_Putkaradze_2005}) has produced an interesting class
of geometric shape optimization problems. Large scale collective behaviour, such as collaboration and formation into flocks, is shaped at least in part by social forces between individuals. 

In this paper we will consider isotropic interaction kernels, that is we consider the case where the attractive and repulsive forces between individuals are determined entirely by the distances between individuals. Even in this simple case there are a wide range of complex phenomena.  For instance, Carrillo, Figalli and Patacchini \cite{Carrillo_Figalli_Patacchini} have shown that weak repulsion at short distances implies that global minimizers of the interaction energy have finite support.  Lim and McCann \cite{Lim_McCann} have shown that for certain power law kernels with mild repulsion and strong attraction, the interaction energy is uniquely minimized by measures that are uniformly distributed on the vertices of a unit simplex. For certain power law kernels, Lopes \cite{Lopes} has shown that minimizers are unique and radially symmetric. For further examples, see  \cite{Balague-et-al-dimensionality}, \cite{Carrillo_Delgadino_Mellet_2016}, and \cite{frank2019proof}. 

We focus on the particular phenomenon of formation of multiple flocks within a large population. To this end, we consider interactions that are attractive at short distances, followed by a repulsive ``barrier" at mid distances, and neutral at long distances. See Figure~\ref{fig:K sketch} for a sketch of such an interaction kernel. The short range attraction motivates individuals to flock together, while the mid range repulsion prevents the entire population from forming into a single flock. In biological applications, the aggregation problem typically involves kernels that are repulsive at short distances, to prevent collisions between individuals, for example see \cite{Mogilner_Bent_Spiros_Edelstein-Keshet_2003}.  In this paper, we use a density constraint to prevent the population from collapsing to a point, and to force the population to spread out as its size increases. So, while there is no short-range repulsion built into the kernel, the density constraint provides a hard-core repulsion.

Many existing models for which minimizers are known to exist have kernels that grow at infinity, for example Theorem 2.1 and Remark 2.2 in \cite{Choksi_Fetecau_Topaloglu_2015} show existence for a general family of such kernels. We dispose of this assumption of growth at infinity, and this lack of coercivity presents an additional technical challenge in proving existence of minimizers.

\begin{figure}
    \centering
    \begin{tikzpicture}[scale=0.75]
    \draw[thick,->] (0,0) -- (7.18,0)node[right] {$r$};
    \draw[thick,->] (0,0) -- (0,4);
    \draw[thick] (0,0) -- (0,-2);
    
    \draw (0,-1.5) .. controls (0.65,-1.5) and (0.9,-0.75) .. (1,0);
    \draw (1,0) .. controls (1.5,3) and (2,3.55) .. (3,3.65);
    \draw (3,3.65) .. controls (4,3.55) .. (5,1.2);
    \draw[->] (5,1.2) .. controls (5.5,0.1) and (6,0.1) .. (7,0.1);
    \draw (5,3.5)node{$K(r)$};
    
    \draw (1,0.15) -- (1,-0.15)node[anchor= north west] {$w$}; 
    \draw (1.75,0.15) -- (1.75,-0.15)node[anchor= north west] {$a$}; 
    \draw (4.25,0.15) -- (4.25,-0.15)node[below] {$a+W$}; 
    
    \draw (-0.15,-1.5)node[left] {$-d$} -- (0.15,-1.5);
    \draw (-0.15,2.85)node[left] {$h$} -- (0.15,2.85);
    \end{tikzpicture}
    \caption{Sketch of a ``well-barrier" type kernel. The parameters $d$ and $w$ represent the depth and width of the attractive well, and the parameters $h$ and $W$ represent the height and width of the repulsive barrier.}
    \label{fig:K sketch}
\end{figure}
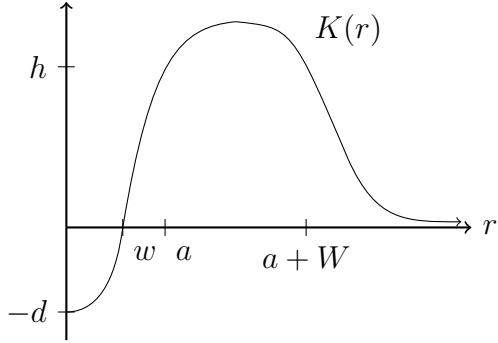

We show that, under certain conditions on the relative sizes of the attractive well and repulsive barrier, there are minimizing sequences of densities that are indicator functions of balls, whose centres get infinitely far apart from one another. To show this, we use the concentration compactness principle to show that minimizers exist in the case where distant individuals have no mutual interaction. The dichotomy case of the concentration compactness principle is our main tool for extracting a minimizer even though an arbitrary minimizing sequence is expected to split into many pieces whose supports may be far away from each other.

\

\subsection{Statement of Main Results}

Consider the problem of minimizing the energy
\begin{equation*}
    \mcE[\rho]=\int_{\mathbb{R}^N}\int_{\mathbb{R}^N} K(|x-y|)\rho(x)\rho(y)\, dx \, dy
\end{equation*}
given by an interaction kernel $K$ over the set
\begin{equation*}
    \mcA_m=\{ \rho\in L^1(\mathbb{R}^N) \; : \; 0\leq \rho \leq 1, \; ||\rho||_{L^1(\mathbb{R}^N)}=m \}
\end{equation*} of densities having total mass m. 
This nonlocal interaction energy arises in the study of aggregation in biology, as outlined in \cite{Bernoff_Topaz_2013}.

We will be considering a certain family of kernels. The key features of these kernels are that they are attractive at short distances, followed by a repulsive ``barrier," followed by decay at infinity.  More precisely, we will only consider kernels $K: [0,\infty) \to \mathbb{R}\cup\{+\infty\}$ which have a ``well-barrier" shape, as given by the following conditions:

\begin{enumerate}
    \item[\textbf{(K1)}] \textbf{Well.} $K(0)<0,$ and $K$ is non-decreasing on  some interval $[0,a],$
    \item[\textbf{(K2)}] \textbf{Barrier.} $K(r)\geq h$ on $[a,a+W],$ for some width $W>0,$ and
    \item[\textbf{(K3)}] \textbf{Decay.} $K(r)\geq0$ on $(a+W,\infty),$ and $\lim_{r\to \infty}K(r)=0$
\end{enumerate}

It turns out, for large mass, minimizers do not exist, but we can construct minimizing sequences that consists of indicators of finitely many disjoint balls that become increasingly far apart. 

To get a separation into multiple ``droplets'' or ``flocks'' for general well-barrier type kernels, we will make the following additional assumptions on the relative sizes of the attractive well and repulsive barrier. Let $d=-K(0),$ which represents the depth of the well, and let $w=\inf\{r>0:K(r)>0 \}$ be the width of the well.

\begin{enumerate}
    \item[\textbf{(K4)}]\textbf{Barrier height.}  $d<h$, and
    \item[\textbf{(K5)}]\textbf{Barrier width.} $W\leq a+3w$.
\end{enumerate}

Due to the kernel's decay at infinity, minimizers do not generally exist, but we can still consider the minimal energy:

\begin{equation*}
    E(m)=\inf_{\rho\in \mcA_m} \mcE[\rho].
\end{equation*}

We prove that this minimal energy is attained by a sequence of densities which are indicators of many balls which become infinitely far apart from one another.

\begin{thm}\label{thm:inf is many balls}
Let $K$ be a bounded kernel which satisfies $\mathbf{(K1)-(K5)}$.
Further, assume $K$ is strictly increasing on $[0,a]$. Then for any $m>0,$ there exists some $k\in \mathbb{N}$ and radii $r_1, \dots, r_k >0$ such that
\begin{equation}
    E(m)= \sum_{i=1}^k\mcE[\mathds{1}_{B(0,r_i)}].
\end{equation}

\end{thm}

We prove Theorem \ref{thm:inf is many balls} by first considering the case where the kernel $K$ is compactly supported. In this case, once the distance between droplets is large enough there is no longer a benefit to moving further apart, so minimizers do exist. The main tool for proving this existence is Lions' concentration compactness principle. We start with a minimizing sequence, and concentration compactness tells us that there is a subsequence that is vanishing, tight up to translation, or dichotomous. A common strategy when using concentration compactness is to rule out the possibility of a vanishing or dichotomous minimizing sequence. The resulting tight minimizing sequence is then used to show that a minimizer exists. In our case, we cannot rule out dichotomy, instead we use it to decompose the minimizing sequence into finitely many pieces whose supports become very far away from each other. Further, each of these pieces are tight up to translation. Then, since the kernel is zero outside of some large radius, these pieces do not actually need to spread out far away from each other in order to reach the minimal energy, allowing us to construct a minimizer.

Then to prove Theorem \ref{thm:inf is many balls}, we obtain a minimizing sequence by truncating the kernel and then sending the pieces of the minimizer for the truncated problem infinitely far away from each other. In \ref{section:gen min} we analyze this using the notion of generalized minimizers.

To better understand the underlying geometry, one may consider a toy version of this problem where the barrier is infinitely high, the attractive well is a constant function, and the kernel is zero elsewhere. More precisely, the kernel K is $-1$ on $[0,1),$ followed by $+\infty$ on an interval $[1,1+W],$ and zero elsewhere.  In this simplification, the only parameter is the width of the repulsive barrier relative to the attractive well. If the barrier is wide enough, then any density with finite energy can be easily decomposed into finitely many well separated pieces whose supports have diameter at most $1.$ However, the geometric problem becomes more complicated when the repulsive barrier is narrower. In \cite{carrie-thesis}, we showed that no matter the width of the forbidden range of distances, the minimal energy in the one dimensional problem is always attained by a well separated union of intervals of length $1$, plus possibly a piece of smaller mass which is supported in an interval of length one. However, in the $W=0$ case, there are examples of configurations which attain the minimal energy and which cannot be decomposed into pieces that do not interact with one another.

In \ref{subsec:lingrowth} we show that the minimal energy grows linearly in $m,$ which suggests that for large mass, the droplets in a minimizer are all relatively close in size. In Section \ref{sec:sizeofdrop} we investigate the sizes of the droplets in the specific example where the kernel is a power law near $0.$ We show that generalized minimizers consist of many balls of the same size, and at most one other smaller ball.

\section{Preliminaries on shape of minimizers}

Throughout this section we consider kernels $K$ which satisfy the well-barrier conditions \textbf{(K1)-(K3)}. Additionally, we will need the following notation to denote the interaction energy between two densities $\rho$ and $\eta$
\begin{equation*}
    \mcE[\rho,\eta]=\int_{\mathbb{R}^N}\int_{\mathbb{R}^N}K(|x-y|)\rho(x)\eta(y) \, dx \, dy.
\end{equation*}
Using this notation we will write $\mcE[\rho+\eta]=\mcE[\rho]+2\mcE[\rho,\eta]+\mcE[\eta].$

\subsection{Balls are minimal for small mass}
First we show that indicators of balls are minimal for small mass. 
\begin{lem}\label{lem:smallmassballmin}
There is a mass $m_0>0$, such that for all $m\leq m_0$
\begin{equation}
    \inf_{\rho \in \mcA_m}\mcE[\rho] = \mcE[\mathds{1}_{B(0,r)}]
\end{equation}
where $B(0,r)$ is the ball of measure $m$.
\end{lem}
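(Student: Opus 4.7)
The plan is to pick the mass threshold small enough that any candidate ball fits inside the attractive well, and then combine Riesz rearrangement with a sign-matching argument that exploits the pointwise constraint $\rho \leq 1$ to reduce the minimization to indicators of balls.

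Take $m_0 = \omega_N(w/2)^N$, where $\omega_N = |B(0,1)|$, so that for $m \leq m_0$ the ball $B := B(0,r_m)$ of volume $m$ has diameter at most $w$. Writing $K = K^+ + K^-$ with $K^\pm := \max(\pm K, 0)$, the well-barrier conditions and the definition of $w$ imply that $K^+$ vanishes a.e.\ on $[0,w]$, so
\[
\mcE[\mathds{1}_B] = \int\int K^-(|x-y|)\mathds{1}_B(x)\mathds{1}_B(y)\,dx\,dy,
\]
while $-K^-$ is a non-negative, radially non-increasing function supported in $B(0,w)$. For any $\rho \in \mcA_m$, since $K^+ \geq 0$,
\[
\mcE[\rho] \geq \int\int K^-(|x-y|)\rho(x)\rho(y)\,dx\,dy.
\]
Riesz's rearrangement inequality applied to the triple $(-K^-,\rho,\rho)$ then yields $\int\int K^-\rho\rho \geq \int\int K^-\rho^*\rho^*$, where $\rho^*$ is the symmetric decreasing rearrangement of $\rho$.

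The heart of the proof is to show that for any symmetric decreasing $\rho^* \in \mcA_m$,
\[
D := \int\int K^-(|x-y|)\bigl(\rho^*(x)\rho^*(y) - \mathds{1}_B(x)\mathds{1}_B(y)\bigr)\,dx\,dy \geq 0.
\]
Using the identity $\rho^*(x)\rho^*(y) - \mathds{1}_B(x)\mathds{1}_B(y) = (\rho^*(x)-\mathds{1}_B(x))\rho^*(y) + \mathds{1}_B(x)(\rho^*(y)-\mathds{1}_B(y))$ and Fubini, one gets $D = \int (\rho^*-\mathds{1}_B)(V_1+V_2)\,dx$ with $V_1 := K^- * \rho^*$ and $V_2 := K^- * \mathds{1}_B$. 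Since $-K^-$, $\rho^*$, and $\mathds{1}_B$ are all non-negative and symmetric decreasing, and convolution preserves the symmetric decreasing property in this setting, $-V_1$ and $-V_2$ are non-negative symmetric decreasing; equivalently $V_1+V_2$ is non-positive and radially non-decreasing in $|x|$. Let $c$ denote its value on $\{|x|=r_m\}$ and set $W := V_1+V_2 - c$, so that $W \leq 0$ on $B$ and $W \geq 0$ on $B^c$. Simultaneously, the constraint $\rho^* \leq 1$ forces $\rho^* - \mathds{1}_B \leq 0$ on $B$ and $\geq 0$ on $B^c$. Mass conservation $\int(\rho^*-\mathds{1}_B)\,dx = 0$ lets us subtract off the constant and rewrite $D = \int W(\rho^*-\mathds{1}_B)\,dx$, whose integrand is non-negative pointwise.

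Chaining the three inequalities gives $\mcE[\rho] \geq \mcE[\mathds{1}_B]$ for every $\rho \in \mcA_m$ with $m \leq m_0$, which together with $\mathds{1}_B \in \mcA_m$ proves the lemma. The main obstacle is the last step: Riesz alone cannot reduce a symmetric decreasing competitor bounded by $1$ to the indicator of a ball, because $\rho^*$ and $\mathds{1}_B$ generally have different distribution functions and so neither is a rearrangement of the other. The sign-matching argument above, which couples the radial monotonicity of $V_1+V_2$ with the pointwise bound $\rho^*\leq 1$, is what bridges that gap.
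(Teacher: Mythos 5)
Your proof is correct and takes essentially the same approach as the paper: truncate $K$ to its non-positive part supported in $[0,w]$, apply Riesz rearrangement to pass to $\rho^*$, and then reduce from the symmetric-decreasing competitor $\rho^*$ to $\mathds{1}_B$, with the threshold $m_0$ chosen so that $B$ has diameter at most $w$. The last reduction, which the paper attributes to the bathtub principle, you have instead spelled out via an equivalent sign-matching argument on $\int(\rho^*-\mathds{1}_B)(V_1+V_2)$; the only blemish is the line $K=K^++K^-$ with $K^\pm:=\max(\pm K,0)$, which as written gives $|K|$ rather than $K$, though your subsequent use of $K^-$ as the non-positive part $\min(K,0)$ is consistent and makes the argument go through.
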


\begin{proof}
Fix $m>0$ and let $\rho \in \mcA_m$ and let $r$ be the radius of a ball with measure $m.$
First note that 
\begin{equation*}
    K(r)\leq \mathds{1}_{[0,w]}(r) K(r).
\end{equation*}
The right hand side, $ \mathds{1}_{[0,w]}(r) K(r)$ is non-decreasing, and so we may apply the Riesz rearrangement inequality (see \cite{Lieb_Loss_Society_1997} Ch.3), which says that
\begin{equation*}
    \int\limits_{\mathbb{R}^N}\int\limits_{\mathbb{R}^N}K(|x-y|)\mathds{1}_{|x-y|\leq w} \rho(x) \rho (y)\, dx \, dy
     \geq     \int\limits_{\mathbb{R}^N}\int\limits_{\mathbb{R}^N}K(|x-y|)\mathds{1}_{|x-y|\leq w} \rho^*(x) \rho^* (y)\, dx \, dy
\end{equation*}
where $\rho^*$ is the symmetrically decreasing rearrangement of $\rho$. By the bathtub principle (see \cite{Lieb_Loss_Society_1997} Theorem 1.14) we have 
\begin{equation*}
    \int\limits_{\mathbb{R}^N}\int\limits_{\mathbb{R}^N}K(|x-y|)\mathds{1}_{|x-y|\leq w} \rho^*(x) \rho^* (y)\, dx \, dy \geq 
    \int\limits_{\mathbb{R}^N}\int\limits_{\mathbb{R}^N}K(|x-y|)\mathds{1}_{|x-y|\leq w} \mathds{1}_{B(0,r)}(x) \mathds{1}_{B(0,r)} (y)\, dx \, dy.
\end{equation*}
Finally, if $r\leq w,$ then 
\begin{equation*}
    \int\limits_{\mathbb{R}^N}\int\limits_{\mathbb{R}^N}K(|x-y|)\mathds{1}_{|x-y|\leq w} \mathds{1}_{B(0,r)}(x) \mathds{1}_{B(0,r)} (y)\, dx \, dy=\mcE[\mathds{1}_{B(0,r)}].
\end{equation*}
So, if $m$ is small enough so that $r\leq w,$ we have that $\mcE[\rho] \geq \mcE[\mathds{1}_{B(0,r)}]$ for all $\rho \in \mcA_m$.
\end{proof}

\subsection{Minimizers have compact support}

To prove that minimizing densities have compact support, we will make use of the following Euler-Lagrange conditions. These minimality conditions will also play a key role in showing that points in the support of a minimizing density cannot be within certain range of distances from one another.

\begin{lem}\label{nec-cond}
Let $\rho \in \mcA_m$ be a local minimizer of the energy $\mcE,$ in the $L^1$ topology. Then there is a $\lambda < 0$ such that for almost every $x$ we have
\begin{equation*}
    K*\rho(x) 
    \left\{
    \begin{array}{lr}
        \geq \lambda, & \; \text{ if } \rho(x)=0,\\
        = \lambda, & \; \text{ if } 0<\rho(x)<1,\\
        \leq \lambda, & \; \text{ if } \rho(x)=1.
    \end{array} 
    \right.
\end{equation*}
\end{lem}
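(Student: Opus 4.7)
My plan is a first-variation (mass-transfer) argument, the standard tool for Euler--Lagrange conditions in constrained measure minimization.  The only adaptation needed for the possibility that $K$ takes the value $+\infty$ is to work with the potential $u := K*\rho$, which is a priori $[-\infty, +\infty]$-valued, and to use that $\mcE[\rho] = \int u\,\rho\,dx < \infty$ forces $u$ to be finite almost everywhere on $\{\rho > 0\}$.

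First I would fix $\eta \in (0, 1/2)$ and measurable sets $E \subset \{\rho \le 1-\eta\} \cap \{u < \infty\}$ and $F \subset \{\rho \ge \eta\}$ with $0 < |E| = |F| < \infty$, restricting further if necessary to keep pairwise distances between $E \cup F$ and itself out of the pole set of $K$, so that the self-interaction $\mcE[\mathds{1}_E - \mathds{1}_F]$ is finite.  For $0 \le t \le \eta$ the perturbed density $\rho_t := \rho + t(\mathds{1}_E - \mathds{1}_F)$ lies in $\mcA_m$ and tends to $\rho$ in $L^1$.  Using symmetry of $K$,
\begin{equation*}
    \mcE[\rho_t] - \mcE[\rho] \;=\; 2t\int(\mathds{1}_E - \mathds{1}_F)\,u\,dx \;+\; t^2\,\mcE[\mathds{1}_E - \mathds{1}_F],
\end{equation*}
so local $L^1$-minimality of $\rho$ yields $\int_E u \ge \int_F u$.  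Rearranging into averages and shrinking $E$ and $F$ to Lebesgue density points (via the Lebesgue differentiation theorem), this promotes to a pointwise separation: the essential supremum of $u$ on $\{\rho > 0\}$ is no greater than the essential infimum of $u$ on $\{\rho < 1\}$.  Choosing $\lambda$ to be any value in the resulting closed interval would give the three-case formula.

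Finally I would verify $\lambda < 0$.  The decay \textbf{(K3)} together with $\rho \in L^1(\mathbb{R}^N)$ gives $u(x) \to 0$ as $|x| \to \infty$, and since $\rho$ has finite mass, $\{\rho < 1\}$ contains the complement of a set of finite measure, so the essential infimum of $u$ on $\{\rho < 1\}$ is at most $0$, and hence $\lambda \le 0$.  For strict negativity I would exhibit a set of positive measure inside $\{\rho < 1\}$ on which $u$ is uniformly bounded above by a negative constant: at points $x$ with $\rho(x) = 0$ lying within a small distance $\delta \ll w$ of a Lebesgue density point of $\supp(\rho)$, the contribution to $u(x)$ from the attractive well is strictly negative (using $K(0) = -d < 0$ and monotonicity on $[0, a]$ from \textbf{(K1)}), while the far-field contribution is controlled by \textbf{(K3)}.

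The main obstacle I anticipate is making the strict-sign step fully quantitative: it requires balancing the strictly negative attractive contribution from a small neighborhood of $\supp(\rho)$ against the potentially large positive barrier contribution from mass at distances in $[a, a+W]$.  The mass-transfer calculation and the three-case separation are routine, once the infinite values of $K$ have been handled.
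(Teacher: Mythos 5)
Your first-variation argument for the separation inequality is essentially the paper's: the paper perturbs $\rho$ by $t(\varphi_\epsilon - \psi_\epsilon)$ with general nonnegative test functions supported where $\rho < 1-\epsilon$ and $\rho > \epsilon$ respectively, then takes $\epsilon \to 0$; your mass-transfer with $\mathds{1}_E - \mathds{1}_F$ is the special case of indicator test functions, and the Lebesgue-differentiation step recovers the same $\esssup$/$\essinf$ separation that the paper gets by taking $\sup$ over $\psi$ and $\inf$ over $\varphi$. Both handle the pole set of $K$ in the same spirit (the paper by first assuming $\mcE[\varphi,\rho] < \infty$ and noting the inequality is vacuous otherwise). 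So far, same approach.

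The genuine gap is in your argument that $\lambda < 0$, and you have correctly identified it yourself. You propose to exhibit a set of positive measure in $\{\rho < 1\}$ on which $u = K*\rho$ is bounded strictly below zero, by looking near a Lebesgue density point of $\supp(\rho)$ and using the attractive well; but there the barrier contribution $h\int_{a \le |x-y| \le a+W}\rho(y)\,dy$ can overwhelm the well contribution, and you give no mechanism to control it. Your preliminary reduction to $\lambda \le 0$ is also shaky: you assert $u(x) \to 0$ as $|x| \to \infty$, but with a kernel that may be $+\infty$ on the barrier and a density whose support is not yet known to be bounded (compact support of minimizers is proved \emph{using} this lemma), the most one can easily extract from \textbf{(K3)} is a lower bound $\liminf_{|x|\to\infty} u(x) \ge 0$, which is the wrong direction for bounding $\essinf_{\{\rho<1\}} u$ from above. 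The paper's proof of $\lambda < 0$ sidesteps both difficulties entirely. Rather than estimating $u$ pointwise, it supposes $u \ge 0$ on a positive-measure subset $A$ of $\{\rho > 0\}$, shrinks $A$ so that $\mathrm{diam}(A)$ is small enough that the \emph{self}-interaction $\mcE[\rho|_A]$ is strictly negative (all pairs fall inside the attractive well, so the barrier never enters), and then observes
\begin{equation*}
    \mcE[\rho - \rho|_A] = \mcE[\rho] - 2\mcE[\rho,\rho|_A] + \mcE[\rho|_A] < \mcE[\rho],
\end{equation*}
since $\mcE[\rho,\rho|_A] = \int_A u\rho \ge 0$ and $\mcE[\rho|_A] < 0$. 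Adding back the removed mass as a far-away droplet produces an admissible competitor strictly beating $\rho$, an $L^1$-small perturbation since $|A|$ can be taken arbitrarily small — contradicting local minimality. The key structural point your approach misses is that the negativity needed for the contradiction can be sourced from a purely local self-interaction term, which is insensitive to the barrier, rather than from the full potential $u$, which is not.
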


\begin{proof}
The proof follows the same method as in \cite{Burchard_Choksi_Topaloglu_2018}, with some modifications to account for the kernel $K$ taking the value $+\infty$.
Let $S_0=\{\rho=0\}$, and $S_1=\{\rho=1\}$. We want to construct a perturbation of $\rho,$ by adding to $\rho$ on $S_0,$ and subtracting on $S_1.$ Fix any nonnegative, bounded, compactly supported $\varphi$ and $\psi \in L^1(\mathbb{R}^d)$ with $||\varphi||_{L^1(\mathbb{R}^d)}=||\psi||_{L^1(\mathbb{R}^d)=1}$, and $\varphi=0$ a.e. in $S_1$ and $\psi=0$ a.e. in $S_0$. Note that $\mcE[\psi,\rho]<\infty,$ since $\psi=0$ almost everywhere in $S_0,$ and $\mcE[\rho]<\infty.$ For now assume that $\mcE[\varphi,\rho]<\infty$. Let $\epsilon>0$ and define
\begin{equation*}
    \varphi_{\epsilon}(x)=\frac{1}{||\varphi \mathds{1}_{\rho<1-\epsilon}||_{L^1(\mathbb{R}^N)}}\varphi(x)\mathds{1}_{\rho<1-\epsilon}(x),
\end{equation*}
\begin{equation*}
    \psi_{\epsilon}(x)=\frac{1}{||\psi \mathds{1}_{\rho>\epsilon}||_{L^1(\mathbb{R}^N)}}\psi(x)\mathds{1}_{\rho>\epsilon}(x).
\end{equation*}
Then $\eta_t=\rho+t(\varphi_{\epsilon}-\psi_{\epsilon})\in \mcA_m$. For sufficiently small $t>0$ we have  $\mcE[\eta_t]\geq \mcE[\rho],$ since $\rho$ is a local minimizer. Then,
\begin{align*}
    0 &\leq \lim_{t \to 0^+}\frac{\mcE[\rho+t(\varphi_{\epsilon}-\psi_{\epsilon})]-\mcE[\rho]}{t} \\
    & =\lim_{t \to 0^+} \frac{\mcE[\rho]+2t\mcE[\rho,\varphi_{\epsilon}-\psi_{\epsilon}]+t^2\mcE[\varphi_{\epsilon}-\psi_{\epsilon}]-\mcE[\rho]}{t} \\
    &=2\int_{\mathbb{R}^N}K*\rho(x)(\varphi_{\epsilon}(x)-\psi_{\epsilon}(x))\,dx.
\end{align*}
Then taking $\epsilon \to 0,$ by dominated convergence we have
\begin{equation}\label{necinq}
    \int_{\mathbb{R}^N}K*\rho(x)\varphi(x)\,dx \geq \int_{\mathbb{R}^N}K*\rho(x)\psi(x)\,dx.
\end{equation}
The inequality (\ref{necinq}) also holds for any nonnegative $\varphi,\psi\in L^1(\mathbb{R}^N)$ that satisfy $||\varphi||_{L^1(\mathbb{R}^N)}=||\psi||_{L^1(\mathbb{R}^N)}=1,$ $\psi=0$ a.e. in $S_0$, $\varphi=0$ a.e in $S_1$, and $\mcE[\varphi,\rho]<\infty,$ by density of bounded compactly supported functions in $L^1(\mathbb{R}^N)$. Finally, we can relax the assumption that $\mcE[\varphi,\rho]<\infty,$ since (\ref{necinq}) is trivial in this case. Now let
\begin{equation}
    \lambda=\sup\left\{ \int_{\mathbb{R}^N}K*\rho(x)\psi(x)\,dx \; : \; ||\psi||_{L^1(\mathbb{R}^N)}=1, \; \psi \geq 0; \; \psi=0 \; \text{ a.e. in} \; S_0 \right\}.
\end{equation}
Then, by (\ref{necinq})
\begin{equation}
    \lambda \leq \inf\left\{ \int_{\mathbb{R}^N}K*\rho(x)\varphi(x)\,dx \; : \; ||\varphi||_{L^1(\mathbb{R}^N)}=1, \; \varphi \geq 0; \; \varphi=0 \; \text{ a.e. in} \; S_1 \right\}.
\end{equation}
These two equations tell us that$K*\rho(x) \leq \lambda$ for a.e $x$ such that $\rho(x)>0$ and $K*p(x)\geq \lambda$ for a.e. $x$ such that $\rho(x)<1$,  respectively.

To see that $\lambda< 0,$ suppose for contradiction that $K*\rho(x)\geq0$ on a set of positive measure $A\subset \{\rho >0 \}.$ Without loss of generality, assume $A$ has small enough diameter so that $\mcE[\rho|_A]< 0$. Then, 
\begin{equation*}
    \mcE[\rho-\rho|_A]=\mcE[\rho]-2\mcE[\rho,\rho|_A]+\mcE[\rho_A] < \mcE[\rho].
\end{equation*}
This says that we can improve $\rho$ by removing a small piece of it. We can construct a competitor by adding a small piece at infinity. To make the perturbation from $\rho$ small, we can pick $A$ to be as small as we like. 
\end{proof}

Again, following the general argument from \cite{Burchard_Choksi_Topaloglu_2018}, we can now prove that local minimizers are compactly supported.

\begin{lem}[$L^1$ local minimizers have compact support] \label{lem:mins have compact support}
If $\rho \in \mcA_m$ is a local minimizer of the energy $\mcE$ in the $L^1$ topology, then $\rho$ is compactly supported.
\end{lem}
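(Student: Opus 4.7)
The plan is to combine the Euler–Lagrange condition from Lemma \ref{nec-cond} with the decay of $K$ at infinity. The Lagrange multiplier forces the potential $K*\rho$ to be bounded above by a strictly negative constant on the essential support of $\rho$, while the decay of $K$ together with the integrability of $\rho$ forces $K*\rho$ to tend to zero far from the origin. These two facts are inconsistent if the essential support of $\rho$ is unbounded.

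Concretely, I would first apply Lemma \ref{nec-cond} to produce $\lambda<0$ such that $K*\rho(x)\leq \lambda$ for almost every $x$ in $\{\rho>0\}$ (this merges the cases $0<\rho<1$ and $\rho=1$ in the statement of that lemma). The next step is to show that $K*\rho(x)\to 0$ as $|x|\to\infty$. Given $\epsilon>0$, choose $R_0$ so that $\int_{|y|>R_0}\rho(y)\,dy<\epsilon$ (possible because $\rho\in L^1$), and choose $R_1$ so that $|K(r)|<\epsilon$ for all $r>R_1$ (possible by (K3), since $K$ is bounded below by $-d$, is nonnegative past $a+W$, and decays to $0$). For $|x|>R_0+R_1$, split
\begin{equation*}
    K*\rho(x) \;=\; \int_{|y|\leq R_0} K(|x-y|)\,\rho(y)\,dy \;+\; \int_{|y|>R_0} K(|x-y|)\,\rho(y)\,dy.
\end{equation*}
In the first integral, $|x-y|>R_1$, so $|K(|x-y|)|<\epsilon$ and the contribution is at most $\epsilon m$. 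In the second, using boundedness of $K$, the contribution is at most $\|K\|_\infty\,\epsilon$. Hence $|K*\rho(x)|\leq(m+\|K\|_\infty)\,\epsilon$ for $|x|$ large enough.

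Combining these two observations gives the contradiction. If the essential support of $\rho$ were unbounded, the set $\{\rho>0\}\cap\{|x|>R\}$ would have positive measure for every $R$. Choosing $R$ large enough that $|K*\rho(x)|<|\lambda|/2$ on $\{|x|>R\}$ and picking (almost any) such $x$ in $\{\rho>0\}$, one would simultaneously have $K*\rho(x)\leq \lambda<-|\lambda|/2$ and $K*\rho(x)>-|\lambda|/2$, which is impossible. Therefore the essential support of $\rho$ is bounded, and $\rho$ is compactly supported.

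The only step requiring a small amount of care is the decay statement $K*\rho(x)\to 0$ — entirely routine when $K$ is bounded, but mildly subtle if one wishes to accommodate kernels taking the value $+\infty$ on the barrier, in which case one would split $K$ into its positive and negative parts and use finiteness of $\mcE[\rho]$ to control the positive part near points where the kernel blows up. In the bounded regime relevant to Theorem \ref{thm:exist}, the argument is completely direct.
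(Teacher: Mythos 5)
Your proof is correct and follows essentially the same route as the paper: Lemma \ref{nec-cond} supplies $\lambda<0$ with $K*\rho\leq\lambda$ a.e.\ on $\{\rho>0\}$, and the behaviour of $K*\rho$ at infinity then forces the set $\{K*\rho\leq\lambda\}$, hence the support, to be bounded. The only divergence is that you prove the two-sided statement $K*\rho(x)\to 0$, whose upper half relies on $\|K\|_{L^\infty}$; the paper only establishes $\liminf_{|x|\to\infty}K*\rho(x)\geq 0$, using nothing beyond $K\geq K(0)$ everywhere and $K\geq 0$ at large distances, so its argument also covers the kernels of the preliminaries section that take the value $+\infty$ on the barrier. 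Since the contradiction with $\lambda<0$ only needs the lower bound, you can simply discard your upper estimate; the patch you sketch via finiteness of $\mcE[\rho]$ is unnecessary (and would not by itself give decay of the positive part of $K*\rho$ far out in the tail of the support anyway).
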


\begin{proof}
Let $\rho \in \mcA_m$ be a local minimizer of the energy $\mcE,$ in the $L^1$ topology. Then by Lemma $\ref{nec-cond},$ there is a $\lambda<0$ such that, up to a set of measure zero, $\{\rho>0\}\subseteq \{K*\rho\leq \lambda \}$
For any $\epsilon>0$, let $R>0$ be large enough so that 
\begin{equation*}
    \int_{|y|>R} \rho(y)\, dy <\epsilon.
\end{equation*}
Let $r_0>0$ be such that $K(r)\geq 0$ for all $r>r_0$. Then, noting that $K(r)$ takes its minimum value at $r=0,$ we have
\begin{align*}
    K*\rho (x) & \geq \int_{|x-y|<r_0}K(0)\rho(y) \, dy \\
    & > K(0) \epsilon
\end{align*}
for all $|x|>R+r_0.$
Thus
\begin{equation*}
    \liminf_{|x| \to \infty} K*\rho(x) \geq 0.
\end{equation*}
But, since $\lambda<0,$ this means that the set $\{K*\rho(x) \leq \lambda \}$ is bounded. 
\end{proof}

\subsection{Subadditivity}
For $m>0,$ let 
\begin{equation*}
    E(m)=\inf_{\rho \in \mcA_m} \mcE[\rho].
\end{equation*}
We will show that this is a subadditive function of the mass $m.$
\begin{lem}[Subadditivity of the minimal energy]\label{lem:subadd}
Let $K:[0,\infty) \to \mathbb{R}$ be a bounded kernel satisfying the ``well-barrier" conditions $\mathbf{(K1)-(K3)}$. Let $m,n>0.$ Then,
\begin{equation*}
E(m+n) \leq E(m)+E(n)\end{equation*}
\end{lem}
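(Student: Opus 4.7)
The plan is the standard ``place them far apart'' construction: given near-minimizers $\rho\in\mcA_m$ and $\eta\in\mcA_n$, translate $\eta$ by a large vector $z$ and use $\rho(\cdot)+\eta(\cdot-z)$ as a competitor in $\mcA_{m+n}$. For this to work the cross-energy $\mcE[\rho,\tau_z\eta]$ must vanish as $|z|\to\infty$, which is exactly what the decay assumption \textbf{(K3)} together with boundedness of $K$ provides. Passing to the infimum gives the stated inequality.

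Concretely, I would first fix $\epsilon>0$ and choose $\rho\in\mcA_m$, $\eta\in\mcA_n$ with $\mcE[\rho]<E(m)+\epsilon$ and $\mcE[\eta]<E(n)+\epsilon$. The complication is the pointwise constraint $\rho+\tau_z\eta\leq 1$, which is not automatic because the near-minimizers need not be compactly supported. I would handle this by a preliminary reduction: given any $\rho\in\mcA_m$, truncate it to $\rho_R=\rho\mathds{1}_{B(0,R)}$, with $R$ so large that the discarded mass is $<\delta$, and reinject the lost mass as the indicator of a small ball placed arbitrarily far from $B(0,R)$. Since $K$ is bounded, the truncation changes the energy by $O(\delta)$; since $K$ decays at infinity, the interaction of the far-away small ball with $\rho_R$ can be made as small as we like; and the self-energy of that small ball is at most $\|K\|_\infty \delta^2$. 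Thus for each $\epsilon>0$ we may assume $\rho$ and $\eta$ are supported in some common large ball $B(0,R_0)$ at the cost of enlarging $\epsilon$ by a constant factor.

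Once $\rho$ and $\eta$ are compactly supported, for $|z|>2R_0$ the supports of $\rho$ and $\tau_z\eta$ are disjoint, so $\rho+\tau_z\eta\in\mcA_{m+n}$. Then
\begin{equation*}
\mcE[\rho+\tau_z\eta]=\mcE[\rho]+2\mcE[\rho,\tau_z\eta]+\mcE[\eta],
\end{equation*}
and it remains to show $\mcE[\rho,\tau_z\eta]\to 0$ as $|z|\to\infty$. For $|z|$ large enough that $|x-y-z|$ exceeds the decay threshold from \textbf{(K3)} uniformly over $(x,y)$ in the product of supports, the integrand is pointwise small, while the product measure is finite (bounded by $mn$), so the interaction tends to $0$. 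Combining these estimates, for any $\epsilon>0$ we obtain $E(m+n)\leq \mcE[\rho]+\mcE[\eta]+\epsilon\leq E(m)+E(n)+C\epsilon$, and letting $\epsilon\to 0$ finishes the proof.

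The main obstacle is really just the density constraint $\rho\leq 1$; without it the result would follow in one line from the decay of $K$. The compact-support reduction, which uses both the boundedness and the decay of $K$, is the only nontrivial step, and it is the place where the ``hard-core repulsion'' enforced by $\rho\leq 1$ interacts with the tail behaviour of $K$.
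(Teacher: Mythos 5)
Your proposal is correct and follows essentially the same strategy as the paper's proof: truncate the near-minimizers to compact support, add a small far-away ball to restore the mass (controlling its self-energy via boundedness of $K$ and its cross-energy via decay), translate the two compactly supported pieces apart so the constraint $\rho+\tau_z\eta\le 1$ holds automatically, and use the decay of $K$ to kill the cross-interaction. The paper organizes this as a single sequence $\rho_k$ built from minimizing sequences with $R_k,x_k$ sent to infinity simultaneously, whereas you phrase it as a fixed-$\epsilon$ reduction, but the estimates and the role of each hypothesis are identical.
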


\begin{proof}
Let $\{\rho_k^m \}\subset \mcA_m$ and $\{\rho_k^n \}\subset \mcA_n$ be minimizing sequences for $\inf_{\rho \in \mcA_m} \mcE[\rho]$ and $\inf_{\rho \in \mcA_n} \mcE[\rho]$, respectively. From these two sequences we will construct a sequence $\{ \rho_k\}\subset \mcA_{m+n},$ for which 
\begin{equation*}
    \lim_{k\to \infty} \mcE[\rho_k] = \lim_{k\to \infty} \mcE[\rho_k^m] + \lim_{k\to \infty} \mcE[\rho_k^n].
\end{equation*}
Our approach will be essentially to add a translated copy of $\rho_k^n$ to $\rho_k^m$. The translations will be chosen so that the translated $\rho_k^n$ has small interaction with $\rho_k^m.$ Since these densities may not be compactly supported, we will consider their restrictions to balls of suitably large radius, and add a small corrective term to ensure the resulting density is in $\mcA_{m+n}$.
For each $k\in \mathbb{N},$ there is a radius $R_{k}>0$ for which
\begin{equation}\label{bigradius}
    \int_{B(0,R_{k})} \rho_k^m \geq m-\frac{1}{k} \; \; \; \text{ and } \; \; \; \int_{B(0,R_{k})} \rho_k^n \geq n-\frac{1}{k}.
\end{equation}
Next, define
\begin{equation*}
    \overline{\rho}_k^m:= \rho_k^m|_{B(0,R_k)} \; \; \; \text{ and } \; \; \; \overline{\rho}_k^n:= \rho_k^n|_{B(0,R_k)}.
\end{equation*}
Finally, set
\begin{equation*}
    \rho_k(x) := \overline{\rho}_k^m  + \overline{\rho}_k^n (\cdot+x_k) + \mathds{1}_B(x_k,r_k)
\end{equation*}
where $r_k\geq 0$ is chosen so that $\rho_k \in \mcA_{m+n}$, and $x_k=(k+2R_k)e_1$. This choice for $x_k$ means that the supports of these three pieces become arbitrarily far apart as $k\to \infty.$ 
Now, we may expand
\begin{align}
    \mcE[\rho_k]= & \mcE[\overline{\rho}_k^m]+\mcE[\overline{\rho}_k^n] + \mcE[\mathds{1}_{B(x_k,r_k)}] \nonumber \\
     & +2\mcE[\overline{\rho}_k^m,\overline{\rho}_k^n(\cdot + x_k)] +2\mcE[\overline{\rho}_k^m,\mathds{1}_{B(x_k,r_k)}] \label{eqn:big}\\ 
     & + 2\mcE[\overline{\rho}_k^n(\cdot + x_k),\mathds{1}_{B(x_k,r_k)}]. \nonumber 
\end{align}
First, we will check that the final three terms in (\ref{eqn:big}) approach $0$ as $k\to \infty$. By our choice of $x_k$, $\text{dist}(B(0,R_k),B(-x_k,R_k))\geq k$, hence
\begin{align}
    |\mcE[\overline{\rho}_k^m,\overline{\rho}_k^n(\cdot + x_k)]| = & 
    \left|\int_{B(0,R_k)}\int_{B(-x_k,R_k)} K(|x-y|) \rho_k^n(x+x_k)\rho_k^m(y) \; dx \; dy \right| \\
    = & 
    \left|\int_{B(0,R_k)}\int_{B(-x_k,R_k)} K(|x-y|) \mathds{1}_{|x-y|\geq k} \rho_k^n(x+x_k)\rho_k^m(y) \; dx \; dy \right| \\
    \leq & mn\sup_{r\geq k}K(r).
\end{align}
 To conclude, use the fact that $K(r)$ tends to $0$ as $r\to \infty.$ Similar estimates can be done for the other two pair interaction terms.

Next, we need to check that the correction term $\mathds{1}_{B(x_k,r_k)}$ has arbitrarily small self interaction as $k$ gets large. By (\ref{bigradius}), $|B(x_k,r_k)| \leq 2/k$. Combining this with the fact that $K$ is bounded, we see that 
\begin{align*}
   |\mcE[\mathds{1}_{B(x_k,r_k)}]| \leq & ||K||_{\infty} |B(0,r_k)|^2 \\
   \leq & \frac{4||K||_{\infty}}{k^2}.
\end{align*}
Finally, we need to check that 
\begin{equation*}
    \lim_{k\to \infty} \mcE[\overline{\rho}_k^m]=\lim_{k\to \infty} \mcE[\rho_k^m].
\end{equation*}
We may write
\begin{equation}\label{eqn:expansion}
    \mcE[\overline{\rho}_k^m] = \mcE[\rho_k^m-(\rho_k^m-\overline{\rho}_k^m)]= \mcE[\rho_k^m] + \mcE[\rho_k^m-\overline{\rho}_k^m] -2\mcE[\overline{\rho}_k^m,\rho_k^m-\overline{\rho}_k^m]
\end{equation}
As before, using the fact that $K$ is bounded and $||\rho_k^m-\overline{\rho}_k^m||_{L^1(\mathbb{R}^d}\leq 1/k,$ it is straightforward to check that the second and third term in the right hand side of (\ref{eqn:expansion}) both tend to $0$ as $k\to \infty.$
\end{proof}

\section{Proof of Main Results}

Throughout this sections we will only be considering kernel which satisfy \textbf{(K1)-(K5)}.

\subsection{Separation Lemma}

The next Lemma will allow us to conclude that minimizers of the energy $\mcE$ cannot have points in their support within a certain range of ``forbidden" distances. This lemma therefore relates the minimization problem for general kernels to the toy problem in \cite{carrie-thesis}, which could be an avenue for sharpening the hypotheses on the size of the well and barrier required to get separation. 

\begin{lem}\label{lem:separation} Let $\rho \in \mcA_m$. Suppose $x_1,x_2\in \mathbb{R}^N$ satisfy
\begin{enumerate}
    \item $K*\rho(x_1),$ $K*\rho(x_2)\leq 0$, and \item $a+w \leq |x_1-x_2| \leq a+W-w$.
\end{enumerate}
Then, $x_1,x_2 \notin \supp(\rho)$.
\end{lem}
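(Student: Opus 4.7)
The plan is to argue by contradiction. Suppose $x_1 \in \supp(\rho)$; the case $x_2 \in \supp(\rho)$ is symmetric. I will show that the hypotheses force $K*\rho(x_1) + K*\rho(x_2) > 0$, contradicting hypothesis (1). The key geometric observation is that the forbidden-distance range $a + w \leq |x_1 - x_2| \leq a + W - w$ is calibrated precisely so that the $w$-well around $x_2$, viewed from $x_1$, sits entirely inside the barrier annulus of radii $[a, a+W]$: for $y \in B(x_2, w)$ the triangle inequality gives
$$|x_1 - y| \in [\,|x_1 - x_2| - w,\ |x_1 - x_2| + w\,] \subseteq [a,\, a + W],$$
so (K2) yields $K(|x_1 - y|) \geq h$. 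Moreover the wells $B(x_1, w)$ and $B(x_2, w)$ are disjoint: since $K$ is non-decreasing on $[0, a]$ with $K(0) < 0 < h \leq K(a)$, one has $a \geq w$, hence $|x_1 - x_2| \geq a + w \geq 2w$.

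Writing $M_i := \int_{B(x_i, w)} \rho$, I would split $K * \rho(x_1)$ into the integrals over $B(x_1, w)$, $B(x_2, w)$, and the complement. On $B(x_1, w)$, (K1) bounds the integrand from below by $-d$; on $B(x_2, w)$, the observation above bounds it by $h$; on the complement, $|x_1 - y| \geq w$ forces $K(|x_1 - y|) \geq 0$, combining the monotonicity in (K1) with the definition of $w$ and conditions (K2)--(K3). This gives
$$K * \rho(x_1) \geq -d\, M_1 + h\, M_2, \qquad K * \rho(x_2) \geq -d\, M_2 + h\, M_1,$$
and summing produces $K * \rho(x_1) + K * \rho(x_2) \geq (h - d)(M_1 + M_2)$. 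By (K4) we have $h - d > 0$, and $x_1 \in \supp(\rho)$ forces $M_1 > 0$, so the right-hand side is strictly positive, contradicting hypothesis (1).

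I do not expect a real obstacle here: the argument is essentially a direct lower bound on the convolution once the geometric set-up is in place, and assumptions (K4) and (K5) enter only at the very end and through the disjointness and annulus-containment calculations above. The one slightly delicate point is verifying $K \geq 0$ on $[w, \infty)$ from the stated conditions (so the remainder term in the split is non-negative), but this follows by a short monotonicity argument and does not require anything beyond (K1)--(K3).
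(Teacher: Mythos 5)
Your proof is correct and rests on the same two observations as the paper's: the $w$-ball around $x_j$ sits inside the barrier annulus of $x_i$, and $K \geq -d$ on $[0,w]$ while $K \geq 0$ on $(w,\infty)$. The paper packages the algebra slightly differently, chaining $\int_{B(x_i,w)}\rho \leq (d/h)\int_{B(x_j,w)}\rho$ twice to deduce $\int_{B(x_i,w)}\rho = 0$, but that is interchangeable with your step of summing the two lower bounds on $K*\rho(x_1)$ and $K*\rho(x_2)$; the approach is essentially the same.
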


\begin{proof}
For $i=1,2$ we have 
\begin{align}
    0 \geq K*\rho(x_i)&= \int_{|x_i-y|\leq w}K(|x_i-y|)\rho(y)\, dy + \int_{|x_i-y|>w_!}K(|x_i-y|)\rho(y)\, dy  \nonumber \\ 
    & \geq -d \int_{|x_i-y|\leq w} \rho(y)\, dy + h \int_{a\leq |x_i-y|\leq a+W} \rho(y)\, dy. \nonumber 
\end{align}
Rearranging, we obtain
\begin{equation} \label{wellvsbarrier}
    \int_{a\leq |x_i-y|\leq a+W} \rho(y)\, dy \leq \frac{d}{h}\int_{|x_i-y|\leq w} \rho(y)\, dy
\end{equation}
This estimate says that $\rho$ cannot have too much mass in the annulus ...... relative to its mass near $x_i$.
Also note that for $i\neq j$
\begin{equation}\label{subsetinclusion}
    \int_{|x_i-y|\leq w}\rho(y)\, dy \leq \int_{a\leq |x_j-y|\leq a+W}\rho(y)\, dy
\end{equation}
since $a+w\leq |x_1-x_2|\leq a+W-w.$
Then, for $i\neq j$ we alternate between using (\ref{wellvsbarrier}) and (\ref{subsetinclusion}) to obtain
\begin{align*}
    \int_{|x_i-y|\leq w} \rho(y)\, dy & \leq \int_{a\leq |x_j-y|\leq a +W} \rho(y) \, dy \\
    &\leq \frac{d}{h}\int_{|x_j-y|\leq w} \rho(y) \, dy \\
    & \leq \frac{d}{h} \int_{a\leq |x_i-y|\leq a+W} \rho(y) \, dy \\
    & \leq \left(\frac{d}{h}\right)^2 \int_{|x_i-y|\leq w} \rho(y)\, dy.
\end{align*}
Then since $\rho\geq 0$ and $d<h,$ we must have 
\begin{equation*}
    \int_{|x_i-y|\leq w} \rho(y) \, dy =0.
\end{equation*}
\end{proof}

\subsection{Proof of Theorem for Compactly supported kernels}
In this section, we prove existence of minimizers for kernels which are $0$ outside of some large radius. Namely, we prove
\begin{thm}[Existence of minimizers for compactly supported kernels] \label{thm:exist}
Let $K:[0,\infty) \to \mathbb{R}$ be a bounded kernel satisfying $\mathbf{(K1)-(K3)}$. Suppose $K$ is zero outside of some large radius $R>0.$ Then for any $m>0$ the infimum 
\begin{equation*}
    \inf_{\rho \in \mcA_m} \mcE[\rho]
\end{equation*}
is attained by some $\rho \in \mcA_m .$ Moreover, if $K$ is strictly increasing on $[0,a]$ and satisfies $\mathbf{(K4),(K5)}$, then, up to sets of measure zero, any minimizer has the form
\begin{equation}\label{eqn:trunc min decomp}
    \rho=\sum_{i=1}^k \mathds{1}_{B(x_i,r_i)},
\end{equation}
for some centres $x_i\in \mathbb{R}^N$ and radii $r_i>0$ such that $\mcE[\mathds{1}_{B(x_i,r_i)},\mathds{1}_{B(x_j,r_j)}]=0$ for $i\neq j$.
\end{thm}

 The main tool we will use is Lions' concentration compactness principle (Lemma 1.1 in \cite{Lions_1984}), which will allow us to deal with the possibility that minimizing sequences can have pieces which get infinitely far away from one another.

\begin{lem}[Concentration Compactness] \label{lem:ConCom}
Let $\{\rho_n \}$ be a sequence in $L^1(\mathbb{R}^N)$ satisfying
\begin{equation*}
    \rho_n\geq0, \; \text{ and } \, \int_{\mathbb{R}^N}\rho_n(x)\, dx =m,
\end{equation*}
for a fixed $m>0.$ Then there exists a subsequence $\{\rho_{n_k}\}$ that satisfies one of the following three properties:
\begin{enumerate}
    \item (Tightness up to translation) There exists $y_k \in \mathbb{R}^N$ such that for every $\epsilon>0$ there exists an $R>0,$ 
    \begin{equation*}
        \int_{B(y_k,R}\rho_{n_k}(x) \, dx \geq m- \epsilon.
    \end{equation*}
    \item (Vanishing) For all $R>0$ \begin{equation*}
        \lim_{k\to \infty} \sup_{y\in \mathbb{R}^N} \int_{B(y,R)} \rho_{n_k}(x)\, dx =0.
    \end{equation*}
    \item  (Dichotomy) There exists an $0<\alpha<m,$ such that for any $\epsilon>0,$ there exist $k_0\geq 1,$ $y_k\in \mathbb{R}^N,$ and radii $R>0$ and $R_k \to \infty$ as $k\to \infty$ such that \begin{equation*}
        \rho_k^1= \rho_{n_k}|_B(y_k,R), \; \text{ and }\, 
        \rho_k^2= \rho_{n_k}|_{\mathbb{R}^N\setminus B(y_k,R_k)}
    \end{equation*}
    satisfy
    \begin{align*}
        & ||\rho_{n_k}-(\rho_k^1+\rho_k^2)||_{L^1(\mathbb{R}^N)} \leq \epsilon, \\
        & ||\rho_k^1||_{L^1(\mathbb{R}^N)}-\alpha\leq \epsilon, \; \text{ and} \\
        & ||\rho_k^2||_{L^1(\mathbb{R}^N)} -(m-\alpha) <\epsilon
    \end{align*}
    for $k\geq k_0.$
\end{enumerate}
\end{lem}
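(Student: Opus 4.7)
The plan is to build everything around the \emph{concentration function}
\[
Q_n(R)=\sup_{y\in\mathbb{R}^N}\int_{B(y,R)}\rho_n(x)\,dx.
\]
For each $n$, the map $R\mapsto Q_n(R)$ is non-decreasing, takes values in $[0,m]$, and tends to $m$ as $R\to\infty$. Since $\{Q_n\}$ is a uniformly bounded sequence of non-decreasing functions on $[0,\infty)$, Helly's selection theorem yields a subsequence $\{Q_{n_k}\}$ converging pointwise to a non-decreasing limit $Q:[0,\infty)\to[0,m]$. Set
\[
\alpha=\lim_{R\to\infty}Q(R)\in[0,m].
\]
The three alternatives in the Lemma correspond to $\alpha=0$, $\alpha=m$, and $0<\alpha<m$.

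First, if $\alpha=0$ then $Q\equiv 0$, so $Q_{n_k}(R)\to 0$ for every fixed $R$, which is precisely the vanishing conclusion. Second, if $\alpha=m$, fix $\epsilon>0$, choose $R_\epsilon$ with $Q(R_\epsilon)>m-\epsilon/2$, and observe that for $k$ large $Q_{n_k}(R_\epsilon)>m-\epsilon$; by the definition of the supremum there exists $y_k\in\mathbb{R}^N$ with $\int_{B(y_k,R_\epsilon)}\rho_{n_k}\ge m-\epsilon$. A diagonal extraction along $\epsilon=1/j$ produces a single sequence $\{y_k\}$ that works for every $\epsilon$, giving tightness up to translation.

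The substantive case is dichotomy, $0<\alpha<m$. Given $\epsilon>0$, choose $R$ large enough that $\alpha-\epsilon<Q(R)\le\alpha$. For each $k$, pick $y_k$ almost realising the supremum in $Q_{n_k}(R)$, so that $\int_{B(y_k,R)}\rho_{n_k}\ge Q_{n_k}(R)-1/k$, and set $\rho_k^1=\rho_{n_k}|_{B(y_k,R)}$. Then $\|\rho_k^1\|_{L^1}\to\alpha$, giving $\bigl|\,\|\rho_k^1\|_{L^1}-\alpha\bigr|\le\epsilon$ for $k$ large. For the outer piece, note that for every fixed $R'>R$ the monotonicity of $Q_{n_k}$ and the convergence $Q_{n_k}(R')\to Q(R')\le\alpha$ yield $\int_{B(y_k,R')}\rho_{n_k}\le Q_{n_k}(R')\le\alpha+\epsilon$ once $k$ is large. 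Pairing a sequence $R'=R_j\to\infty$ with a diagonal extraction in $k$ produces $R_k\to\infty$ such that
\[
\int_{B(y_k,R_k)\setminus B(y_k,R)}\rho_{n_k}\le 2\epsilon,
\]
so that $\rho_k^2=\rho_{n_k}|_{\mathbb{R}^N\setminus B(y_k,R_k)}$ satisfies $\|\rho_{n_k}-(\rho_k^1+\rho_k^2)\|_{L^1}\le 2\epsilon$ and $\|\rho_k^2\|_{L^1}$ is within $3\epsilon$ of $m-\alpha$.

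The main obstacle is the dichotomy case, where one must simultaneously force $R_k\to\infty$, keep the annular mass uniformly small in $k$, and pin the masses of $\rho_k^1$ and $\rho_k^2$ to $\alpha$ and $m-\alpha$. All three come out of the diagonal argument above, which is where the monotonicity of each $Q_{n_k}$ in $R$ and the pointwise limit $Q_{n_k}\to Q$ are both essential. Once this is in hand, the remainder is bookkeeping: a final diagonal in $\epsilon=1/j$, together with a relabelling of the subsequence, delivers the stated conclusion.
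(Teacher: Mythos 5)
The paper itself does not prove this lemma; it quotes it as Lemma~1.1 of Lions (1984), adding only the remark that the dichotomy clause has been rephrased to expose the construction of $\rho_k^1$ and $\rho_k^2$ used there. Your sketch reproduces the standard proof of Lions' lemma built on the concentration function $Q_n(R)=\sup_y\int_{B(y,R)}\rho_n$ and Helly's selection theorem, so the approach matches the cited source. The treatment of vanishing ($\alpha=0$) and of dichotomy ($0<\alpha<m$) is sound: in particular, the diagonal extraction in $k$ over the ladder $R'=R_j\to\infty$, together with the pointwise convergence $Q_{n_k}(R')\to Q(R')\le\alpha$ and the near-realization of $Q_{n_k}(R)$ by the inner ball, correctly yields radii $R_k\to\infty$ with small annular mass and pins both $\|\rho_k^1\|_{L^1}$ and $\|\rho_k^2\|_{L^1}$ near $\alpha$ and $m-\alpha$, all for a single subsequence.

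There is, however, a genuine gap in the tightness case $\alpha=m$. As written, the centers $y_k$ are chosen at scale $R_\epsilon$, hence a priori depend on $\epsilon$, but the conclusion demands a single sequence $\{y_k\}$ (with only $R$ allowed to depend on $\epsilon$). Diagonalizing over $\epsilon=1/j$ does not rescue this: it passes to a sub-subsequence, but for different $j$ the nearly-optimal centers may be arbitrarily far apart, and nothing in the diagonal step forces them to coincide. The standard fix is an overlap argument rather than a diagonal one. Fix $R_0$ with $Q(R_0)>m/2$ and choose $y_k$ once and for all so that $\int_{B(y_k,R_0)}\rho_{n_k}>m/2$ for $k$ large. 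For any $\epsilon<m/2$, pick $R_\epsilon$ with $Q(R_\epsilon)>m-\epsilon/2$ and a near-optimal center $z_k$; if $B(y_k,R_0)$ and $B(z_k,R_\epsilon)$ were disjoint the total mass would exceed $m$, so they intersect, whence $B(z_k,R_\epsilon)\subset B(y_k,2R_\epsilon+R_0)$ and $\int_{B(y_k,2R_\epsilon+R_0)}\rho_{n_k}>m-\epsilon$. Taking $R=2R_\epsilon+R_0$, which depends only on $\epsilon$, gives tightness about the \emph{fixed} centers $y_k$ as required.
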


The statement for the dichotomy case here appears different from the statement in \cite{Lions_1984}, but the modification reflects the construction of $\rho_k^1,$ and $\rho_k^2$ in \cite{Lions_1984}.

The following Lemma will be used to rule out the vanishing case.
\begin{lem} \label{lem:nonvanish}
    Let the kernel $K$ be bounded and satisfy $\lim_{r\to \infty}K(r)=0.$ Suppose $\{\rho_k \}$ is a sequence in $\mcA_m$ that satisfies the vanishing property in the concentration compactness Lemma. Then, $\lim_{k\to \infty} \mcE [\rho]=0.$ 
\end{lem}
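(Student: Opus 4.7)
The plan is to split the energy integral into a \emph{near} part, where $|x-y|\le R$, and a \emph{far} part, where $|x-y|>R$, and show that the vanishing hypothesis kills the first while the decay of $K$ at infinity kills the second.

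First I would fix an arbitrary $\varepsilon>0$. Using $\lim_{r\to\infty}K(r)=0$, I would choose $R>0$ so large that $|K(r)|<\varepsilon$ for all $r>R$. This immediately gives the far-part estimate
\begin{equation*}
    \left|\int\!\!\int_{|x-y|>R} K(|x-y|)\rho_k(x)\rho_k(y)\,dx\,dy\right| \le \varepsilon \int_{\mathbb{R}^N}\int_{\mathbb{R}^N}\rho_k(x)\rho_k(y)\,dx\,dy = \varepsilon m^2.
\end{equation*}

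For the near part, using boundedness of $K$ and Fubini I would write
\begin{equation*}
    \left|\int\!\!\int_{|x-y|\le R} K(|x-y|)\rho_k(x)\rho_k(y)\,dx\,dy\right| \le \|K\|_\infty \int_{\mathbb{R}^N} \rho_k(x)\left(\int_{B(x,R)}\rho_k(y)\,dy\right) dx.
\end{equation*}
The inner integral is bounded by $\sup_{y\in\mathbb{R}^N}\int_{B(y,R)}\rho_k$, uniformly in $x$, so the whole expression is at most $\|K\|_\infty \cdot m \cdot \sup_{y}\int_{B(y,R)}\rho_k(z)\,dz$. By the vanishing hypothesis (with this fixed $R$), this tends to $0$ as $k\to\infty$.

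Putting the two estimates together, for $k$ large enough we get $|\mathcal{E}[\rho_k]|\le \varepsilon m^2 + \varepsilon$, say. Since $\varepsilon$ was arbitrary, $\mathcal{E}[\rho_k]\to 0$. There is no real obstacle here: the only subtlety is the order of quantifiers, namely that $R$ must be chosen \emph{before} taking $k$ large, which is exactly what the vanishing property in Lemma~\ref{lem:ConCom} allows (the supremum is taken over all $y$ for each fixed $R$).
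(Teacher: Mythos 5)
Your proof is correct and follows essentially the same route as the paper: split the energy at distance $R$, bound the near part by $\|K\|_\infty\, m \sup_y \int_{B(y,R)}\rho_k$ (which vanishes by hypothesis) and the far part by $m^2\sup_{r\ge R}|K(r)|$ (which is small by decay). The only cosmetic difference is the order in which $\varepsilon$, $R$, and $k$ are fixed, and your quantifier order is handled correctly.
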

Note in particular that a minimizing sequence cannot be vanishing, nor can it have a ``vanishing part". 

\begin{proof}
Fix any $R>0.$ Then for any $\epsilon>0,$ using the fact that $\{ \rho_k\}$ is vanishing we have for large $k$ 
\begin{equation*}
    \int_{B(y,R)}\rho(x)dx<\epsilon
\end{equation*}
for any $y\in \mathbb{R}^N.$ We can now compute 
\begin{align*}
    |\mcE[\rho_k]| & \leq \left|\int_{\mathbb{R}^N}\int_{\mathbb{R}^N} K(|x-y|)\mathds{1}_{|x-y|\leq R} \rho_k(x)\rho_k(y)\; dx \; dy  \right| \\ & \;\;\;\;\;\;\;\;+ \left|\int_{\mathbb{R}^N}\int_{\mathbb{R}^N} K(|x-y|)\mathds{1}_{|x-y|> R} \rho_k(x)\rho_k(y)\; dx \; dy  \right| \\
    & \leq ||K||_{L^{\infty}(\mathbb{R}^N)} \int_{\mathbb{R}^N} \rho_k(y) \int_{B(y,R)} \rho_k(x) \; dx \; dy + m^2 \sup_{r\geq R} K(r)  \\
    & \leq ||K||_{L^{\infty}(\mathbb{R}^N)} m \epsilon + m^2 \sup_{r\geq R} K(r)
\end{align*}
The second term of this final sum can be made arbitrarily small since $\lim_{r\to \infty} K(r)=0.$\end{proof}

This Lemma is used to extract droplets:

\begin{lem}\label{lem:tight->min}
Suppose $\{\rho_k \}$ is a sequence of densities in $\mcA_m$. If $\{\rho_k\}$ is tight up to translation, then there is a $\rho \in \mcA_m$ such that
\begin{equation*}
    \lim_{k\to \infty} \mcE[\rho_k]= \mcE[\rho].
\end{equation*}
\end{lem}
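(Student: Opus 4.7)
The plan is to use translation invariance to upgrade tightness up to translation to genuine tightness, extract a weakly convergent subsequence, and then pass to the limit in the bilinear energy using the decay of $K$ at infinity together with tightness.

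First, by translating set $\tilde\rho_k(x):=\rho_k(x+y_k)$ where the $y_k$ come from the tightness condition. Since $\mcE$ is translation invariant, $\mcE[\tilde\rho_k]=\mcE[\rho_k]$, and now for every $\epsilon>0$ there exists $R>0$ with $\int_{B(0,R)}\tilde\rho_k\geq m-\epsilon$ uniformly in $k$. The bound $\tilde\rho_k\leq 1$ together with $\|\tilde\rho_k\|_{L^1}=m$ gives $\int \tilde\rho_k^2\leq m$, so $\{\tilde\rho_k\}$ is bounded in $L^2(\mathbb{R}^N)$ and Banach--Alaoglu extracts a subsequence, still denoted $\tilde\rho_k$, converging weakly in $L^2$ to some $\rho$. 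Since $\{0\leq f\leq 1\}$ is convex and strongly (hence weakly) closed in $L^2$, $0\leq\rho\leq 1$ a.e. Testing weak convergence against $\mathds{1}_{B(0,R)}\in L^2$ and combining with tightness yields $m-\epsilon\leq\int_{B(0,R)}\rho\leq m$; letting $R\to\infty$ and $\epsilon\to 0$ gives $\rho\in\mcA_m$, and $\rho$ inherits the tightness.

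Next, show $\mcE[\tilde\rho_k]\to\mcE[\rho]$. Given $\epsilon>0$ pick $R_0$ with $\sup_{r\geq R_0}|K(r)|<\epsilon$, and split $K=K_0+K_*$ where $K_0:=K\mathds{1}_{[0,R_0]}$. The tail satisfies
\[
\Bigl|\int\!\!\int K_*(|x-y|)\,\rho(x)\rho(y)\,dx\,dy\Bigr|\leq \epsilon\,m^2,
\]
and the same bound holds with $\tilde\rho_k$ in place of $\rho$. For the truncated part, set $U_k:=K_0*\tilde\rho_k$ and $U:=K_0*\rho$. Since $K_0(x-\cdot)\in L^2$ for each $x$, weak $L^2$ convergence gives the pointwise limit $U_k(x)\to U(x)$, and $\|U_k\|_\infty\leq\|K\|_\infty\,m$. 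Writing
\[
\int U_k\tilde\rho_k-\int U\rho=\int(U_k-U)\tilde\rho_k+\int U\,(\tilde\rho_k-\rho),
\]
localise to a large ball $B(0,R')$ chosen so that $\int_{|x|>R'}\tilde\rho_k$ and $\int_{|x|>R'}\rho$ are at most $\epsilon$ (uniformly in $k$, by tightness). On $B(0,R')$, bounded convergence applied to the uniformly bounded sequence $U_k-U$ on a set of finite measure gives $\int_{B(0,R')}|U_k-U|\to 0$, which handles the first piece via $\tilde\rho_k\leq 1$; the second piece converges to zero by weak $L^2$ convergence tested against $U\,\mathds{1}_{B(0,R')}\in L^2$. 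The tail contributes $O(\epsilon)$, so letting $\epsilon\to 0$ concludes.

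The main obstacle is this last passage to the limit in a bilinear functional: weak convergence alone is insufficient in general, and the energy is not lower semicontinuous under weak $L^2$ convergence. What rescues the argument is the combination of three features: the decay $K(r)\to 0$ at infinity, which localises the interaction; the tightness of $\{\tilde\rho_k\}$, which confines the mass to a large ball; and the uniform bound $\tilde\rho_k\leq 1$, which lets bounded convergence substitute for the missing strong $L^1$ convergence when paired against the uniformly bounded potentials $U_k$.
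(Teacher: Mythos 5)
Your proof is correct and follows essentially the same scheme as the paper's: translate by $y_k$, extract a weak limit $\rho$ using the uniform $L^2$ bound $\int\tilde\rho_k^2\leq m$, check $\rho\in\mcA_m$ via tightness and convexity, and then pass to the limit in the bilinear form by separating a local piece (handled by pointwise convergence of the potentials together with weak convergence) from a far-field piece that is controlled uniformly in $k$. The one technical difference worth noting is how the potentials and the far field are treated. You split the kernel $K=K_0+K_*$ with $K_0$ compactly supported, so that $K_0(|x-\cdot|)\in L^2$ and weak $L^2$ convergence gives $U_k(x)\to U(x)$ pointwise, while the $K_*$ tail is bounded using the decay $\lim_{r\to\infty}K(r)=0$ from (K3). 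The paper instead keeps $K$ intact, first upgrading weak $L^q$ convergence of $\tilde\rho_k$ to weak $L^1$ convergence (tightness plus the uniform bound $0\leq\tilde\rho_k\leq 1$ supplies uniform integrability), and then tests against the merely bounded function $K(|x-\cdot|)$ to get $G_k(x)\to G(x)$; the far-field terms are controlled solely by $\|G_k\|_\infty\leq m\|K\|_\infty$ and the tightness of $\tilde\rho_k$. Both routes are valid; the paper's version has the small advantage of not invoking the decay hypothesis (K3) at this step, relying only on boundedness of $K$ and tightness, whereas your kernel truncation is perhaps slightly more transparent since it keeps every test function in $L^2$.
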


\begin{proof}
Follows similar argument as in \cite{Choksi_Fetecau_Topaloglu_2015}.
By tightness up to translation, there exist $y_k\in \mathbb{R}^N$ such that for any $\epsilon>0,$ there is a radius $R>0$ such that 
\begin{equation}\label{eq:tight}
    \int_{B(y_k,R)}\rho_k(y)\, dy \geq m-\epsilon
\end{equation}
for all $k.$ By the translation invariance of the energy, without loss of generality we can take $y_k=0$ for all $k.$ Fix any $1<q<\infty,$ then $\{\rho_k\}$ is a bounded sequence in $L^q(\mathbb{R}^N),$ so there is a $\rho \in L^q(\mathbb{R}^N)$ such that $\rho_k$ converges to  $\rho$ weakly in $L^q.$ 

First, we will verify that $\rho \in \mcA_m$. By (\ref{eq:tight}), 
\begin{equation*}
    \int_{\mathbb{R}^N} \rho(y) \, dy =m.
\end{equation*}
To see that $\rho \geq 0,$ consider the set $A=\{ \rho < 0 \}$. Suppose $|A|>0$, and if $|A|=\infty,$ replace $A$ with a subset that has finite measure. Then, by weak convergence we have
\begin{equation*}
    0\leq \lim_{k\to \infty} \int_A \rho_k(y)\, dy = \int_A \rho(y)\, dy <0,
\end{equation*}
which is a contradiction. Therefore $|A|=0$. Similarly, if we let $B=\{ \rho >1 \},$ and assume $|B|>0$, we can compute
\begin{equation*}
     |B| \geq \lim_{k\to \infty} \int_B \rho_k(y) \, dy = \int_B \rho(y) \, dy > |B|.
\end{equation*}
Combined, this means that $\rho \in \mcA_m$

Next, we will prove that $\lim_{k\to \infty} \mcE[\rho_k] = \mcE[\rho].$
Let
\begin{align*}
    & G_k(x)=K*\rho_k (x),\, \text{ and} \\
    & G(x)=K*\rho(x).
\end{align*}
By (\ref{eq:tight}), $\rho_k$ also converges to $\rho$ weakly in $L^1(\mathbb{R}^N).$ Then since $K$ is bounded, this means that $G_k$ converges to $G$ pointwise.

Fix $\epsilon >0,$ and pick $R>0$ such that 
\begin{equation} \label{eqn:bigradius}
    \int_{\mathbb{R}^N\setminus B(0,R)}\rho_k(y) \, dy \leq \epsilon
\end{equation}
for all $k.$ Then compute
\begin{align}
    \mcE[\rho_k]-\mcE[\rho] = &  \int_{\mathbb{R}^N}G_k(x)\rho_k(x)\, dx - \int_{\mathbb{R}^N}G(x) \rho(x) \, dx  \nonumber \\
    = &  \int_{B(0,R)} (G_k(x)-G(x))\rho_k(x) \, dx +\int_{B(0,R)} G(x)(\rho_k(x)-\rho(x)) \, dx  \label{eqn:sum} \\
    & +  \int_{\mathbb{R}^N\setminus B(0,R)} G_k(x) \rho_k(x)\, dx. -\int_{\mathbb{R}^N\setminus B(0,R)} G(x)\rho(x) \, dx . \nonumber
\end{align}
The first term in (\ref{eqn:sum}) converges to $0$ as $k\to \infty$ by the bounded convergence theorem. The second term also converges to $0$ as $k\to \infty$ since $G|_{B(0,R)}$ is an admissible test function, noting that $||G||_{L^{\infty}(\mathbb{R}^N)}\leq  m ||K||_{L^{\infty}(\mathbb{R}^N)}$ 
Thus, for large enough $k$ we have
\[
\left| \mcE[\rho_k]-\mcE[\rho] \right| \leq 2\epsilon + 2m||K||_{L^{\infty}(\mathbb{R}^N)} \epsilon.
\]
\end{proof}

The next Lemma will allow us to separate a minimizing sequence into multiple pieces, in the case where we get a dichotomous subsequence after applying the concentration compactness principle.
\begin{lem}\label{lem:dichotomy->two pieces}
Suppose $\{\rho_k\}$ is a dichotomous sequence in $\mcA_m.$ Then, for some $0<\alpha<m$ there are sequences $\{\rho_k^1 \}$ and $\{\rho_k^2\}$ in $\mcA_{\alpha}$ and $\mcA_{m-\alpha},$ respectively, such that
\begin{equation}
    \lim_{k\to \infty} \left(\mcE[\rho_k] -\mcE[\rho_k^1] -\mcE[\rho_k^2] \right)=0.
\end{equation}
Moreover if $\rho_k$ is a minimizing sequence, then
\begin{equation}
    \liminf_{k\to \infty} \mcE[\rho_k^1]=\inf_{\rho\in\mcA_{\alpha}}\mcE[\rho],
\end{equation}
and 
\begin{equation}
    \liminf_{k\to \infty} \mcE[\rho_k^2]=\inf_{\rho\in\mcA_{m-\alpha}}\mcE[\rho].
\end{equation}
\end{lem}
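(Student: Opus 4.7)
The plan is to produce $\rho_k^1$ and $\rho_k^2$ by starting from the dichotomy decomposition of Lemma~\ref{lem:ConCom}, applying a diagonal argument to drive $\epsilon \to 0$, and then correcting the masses by a negligible amount to land exactly in $\mcA_\alpha$ and $\mcA_{m-\alpha}$. Concretely, after passing to a subsequence and applying dichotomy with $\epsilon_k \downarrow 0$, I obtain centres $y_k$ and radii $R_k \leq R_k'$ with $R_k' - R_k \to \infty$, such that
$$\tilde\rho_k^1 := \rho_k|_{B(y_k, R_k)}, \qquad \tilde\rho_k^2 := \rho_k|_{\mathbb{R}^N \setminus B(y_k, R_k')}$$
have masses approaching $\alpha$ and $m-\alpha$ respectively, while the middle piece $\mu_k := \rho_k - \tilde\rho_k^1 - \tilde\rho_k^2$ satisfies $\|\mu_k\|_{L^1} \to 0$. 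I then adjust each piece's mass by adding or removing a small indicator ball placed far enough from everything (and from its counterpart) that it has no interaction with any other term; call the corrected densities $\rho_k^1 \in \mcA_\alpha$ and $\rho_k^2 \in \mcA_{m-\alpha}$. Since these correction balls have $L^1$-mass $o(1)$ and $K$ is bounded, their self-interactions and their interactions with everything else are $o(1)$ as well.

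Next, I would expand the total energy as
$$\mcE[\rho_k] = \mcE[\tilde\rho_k^1] + \mcE[\tilde\rho_k^2] + \mcE[\mu_k] + 2\mcE[\tilde\rho_k^1, \tilde\rho_k^2] + 2\mcE[\tilde\rho_k^1, \mu_k] + 2\mcE[\tilde\rho_k^2, \mu_k],$$
and show every error term vanishes. Each pair-term involving $\mu_k$ is bounded by $\|K\|_\infty\, m\, \|\mu_k\|_{L^1} \to 0$, and the self-interaction $\mcE[\mu_k]$ by $\|K\|_\infty \|\mu_k\|_{L^1}^2 \to 0$. For the cross-interaction of the two ``good'' pieces, the separation $\mathrm{dist}(\supp \tilde\rho_k^1, \supp \tilde\rho_k^2) \geq R_k' - R_k \to \infty$, combined with $K(r) \to 0$ as $r \to \infty$, gives
$$|\mcE[\tilde\rho_k^1, \tilde\rho_k^2]| \leq m^2 \sup_{r \geq R_k' - R_k} K(r) \longrightarrow 0.$$
Together with the negligible contribution of the mass corrections, this yields $\mcE[\rho_k] - \mcE[\rho_k^1] - \mcE[\rho_k^2] \to 0$, which is the first conclusion.

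For the minimizing-sequence statement, I would combine this energy decomposition with the trivial lower bounds $\mcE[\rho_k^1] \geq E(\alpha)$, $\mcE[\rho_k^2] \geq E(m-\alpha)$ and the subadditivity Lemma~\ref{lem:subadd}:
$$E(m) = \lim_{k\to\infty} \mcE[\rho_k] \geq \liminf_{k\to\infty} \mcE[\rho_k^1] + \liminf_{k\to\infty} \mcE[\rho_k^2] \geq E(\alpha) + E(m-\alpha) \geq E(m).$$
Equality must hold throughout, forcing $\liminf_k \mcE[\rho_k^j]$ to equal the infimum for the corresponding mass. After passing once more to a subsequence on which both $\mcE[\rho_k^j]$ converge, each is then a minimizing sequence for its mass, completing the proof.

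The main obstacle I foresee is handling the mass correction cleanly: the dichotomy pieces do not quite have the prescribed masses $\alpha$ and $m-\alpha$, and to push them into $\mcA_\alpha$ and $\mcA_{m-\alpha}$ I must insert or delete small indicator balls in a way that does not spoil the vanishing of the cross terms. Boundedness of $K$ together with $K(r)\to 0$ make this routine here — place the correction balls at distance $\gg R_k'$ from $y_k$ and from one another — but the same step would be considerably more delicate for singular or slowly decaying kernels.
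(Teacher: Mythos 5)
Your proof is correct and follows essentially the same strategy as the paper's: pass to a diagonal subsequence in the dichotomy statement to drive the middle-piece mass to zero and the gap between the two ``good'' pieces to infinity, correct the masses by adding or removing small indicators placed far enough away to be harmless, expand the energy and bound every error term using boundedness of $K$ together with the smallness of the correction masses, and then close the minimizing-sequence claim via subadditivity (Lemma~\ref{lem:subadd}) squeezing a chain of inequalities. The one difference worth noting is cosmetic but slightly more robust: you kill the cross-interaction $\mcE[\tilde\rho_k^1,\tilde\rho_k^2]$ using the decay $K(r)\to 0$ as $r\to\infty$ (getting an $o(1)$ term), whereas the paper's proof uses the standing assumption in Section~5.2 that $K$ is compactly supported to conclude the cross-interaction vanishes exactly; your argument therefore proves the lemma in the generality in which it is actually stated, not just for truncated kernels.
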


\begin{proof}
 By dichotomy, after perhaps passing to a subsequence, we can find radii $R_k>0$ and points $y_k\in \mathbb{R}^N$ so that the sequences
\begin{equation}
    \rho_k^1=\rho_k|_{B(y_k,R_k)}, \; \text{ and } \; \rho_k^2=\rho_k|_{\mathcal{R}^N \setminus B(y_k,R_k+\Tilde{R})}
\end{equation}
satisfy
\begin{align}
    & \lim_{k\to \infty}\int \rho_k^1 =\alpha, \; \text{ and} \\
    & \lim_{k\to \infty}\int \rho_k^2 =m-\alpha
\end{align}
for some $0<\alpha<m.$
Note that since $K(r)=0$ for $r\geq R,$
\begin{equation*}
    \mcE[\rho_k^1,\rho_k^2]=0.
\end{equation*}
So,
\begin{equation}\label{eqn:decomp}
    \mcE[\rho_k]=\mcE[\rho_k^1]+\mcE[\rho_k^2]+\mcE[\rho-\rho_k^1-\rho_k^2]+2\mcE[\rho_k^1+\rho_k^2, \rho-\rho_k^1-\rho_k^2] 
\end{equation}
Note that as $k\to \infty$ the last two terms approach $0$ since 
\begin{equation*}
    \lim_{k\to \infty} \int_{\mathbb{R}^N}\rho-\rho_k^1-\rho_k^2 = m-\alpha-(m-\alpha)=0.
\end{equation*}
Next, we will apply concentration compactness to $\{\rho_k^1 \}$ and $\{\rho_k^2\}.$ In order to do this, we must first modify these sequences so that they have constant mass. To do so for $\rho_k^1$, we either decrease the radius $R_k$ if $\rho_k^1$ has too much mass, or add a small piece far away if $\rho_k^1$ has too little mass. For each $k,$ let 
\begin{equation*}
    \overbar{\rho_k^1}=\rho_k|_{B(y_k,\overbar{R_k^1})}+\mathds{1}_{B(x_k,r_k^1)},
\end{equation*} where $\overbar{R_k^1}\leq R_k,$ $r_k^1\geq 0,$ and $x_k\in \mathbb{R}^N$ are chosen so that $\overbar{\rho_k^1}\in \mcA_{\alpha},$
and $|x_k-y_k|>R_k+r_k^1+\Tilde{R}$, and $r_k^1 \to 0$ as $k\to \infty.$ 
Similarly for $\rho_k^2,$ we either increase the radius $R_k$ to decrease the mass, or add a small ball centered at $y_k$ to increase the mass. That is we let
\begin{equation*}
    \overbar{\rho_k^2}=\rho_k|_{\mathbb{R}^N \setminus B(y_k \overbar{R_k^2}+\Tilde{R})}+\mathds{1}_{B(y_k,r_k^2)},
\end{equation*}
where $\overbar{R_k^2}\geq R_k$, and $r_k^2\geq 0$ are chosen so that $\overbar{\rho_k^2}\in \mcA_{m-\alpha},$
and $r_k^2 \to 0$ as $k\to \infty.$
It is straightforward to check, using the fact that $||\rho_k^i-\overbar{\rho_k^i}||_{L^1(\mathbb{R}^N)} \to 0$ as $k\to \infty$ for $i=1,2$ that
\begin{equation*}
    \lim_{k\to \infty} \left(\mcE[\rho_k^i]-\mcE[\overbar{\rho_k^i}] \right)=0.
\end{equation*}
Then, combining this with (\ref{eqn:decomp}), 
\begin{equation} \label{eqn:decomplimit}
    \lim_{k\to \infty} \left(\mcE[\rho_k]-\mcE[\overbar{\rho_k^1}]-\mcE[\overbar{\rho_k^2}] \right)=0.
\end{equation}

Now, suppose $\rho_k$ is a minimizing sequence. By subadditivity (Lemma \ref{lem:subadd}), we have
\begin{equation*}
    \lim_{k\to \infty} \mcE[\rho_k]=\inf_{\rho\in\mcA_m} \mcE[\rho] \leq \inf_{\rho\in\mcA_{\alpha}} \mcE[\rho] + \inf_{\rho\in\mcA_{m-\alpha}} \mcE[\rho].
\end{equation*}
By ($\ref{eqn:decomplimit}$)
\begin{align*}
    \lim_{k\to \infty} \mcE[\rho_k] = & \lim_{k\to\infty} \left( \mcE[\overbar{\rho_k^1}]+\mcE[\overbar{\rho_k^2}] \right) \\
    \geq & \liminf_{k\to\infty}\mcE[\overbar{\rho_k^1}] +\liminf_{k\to\infty}\mcE[\overbar{\rho_k^2}].
\end{align*}
Then we are done since 
\begin{equation*}
    \liminf_{k\to\infty}\mcE[\overbar{\rho_k^1}] \geq \inf_{\rho\in\mcA_{\alpha}} \mcE[\rho],
    \; \text{ and } \, \liminf_{k\to\infty}\mcE[\overbar{\rho_k^1}] \geq \inf_{\rho\in\mcA_{m-\alpha}} \mcE[\rho].
\end{equation*}
\end{proof}

\begin{proof}[Proof of Theorem \ref{thm:exist}] Let $\left\{ \rho_k \right\}$ be a minimizing sequence. 

Then by concentration compactness (Lemma \ref{lem:ConCom}), we can pass to a subsequence to a obtain a minimizing sequence which, abusing notation slightly, I will also label $\{ \rho_k \}$ that is either tight up to translation, vanishing, or dichotomous. By Lemma [\ref{lem:nonvanish}] a minimizing sequence cannot be vanishing since we know $\inf_{\rho\in\mcA_m}\mcE[\rho]<0.$

If the sequence is tight up to translation, we can apply Lemma $\ref{lem:tight->min},$ and conclude that there is a $\rho\in\mcA_m$ such that $\lim_{k \to \infty}\mcE[\rho_k]=\mcE[\rho],$ and so $\rho$ is a minimizer and we are done. 

If the sequence is dichotomous, then by Lemma \ref{lem:dichotomy->two pieces} there are $\rho_k\in \mcA_{\alpha_1},$ and $\rho_k\in\mcA_{\alpha_2},$ with $\alpha_1+\alpha_2=m,$ and
\begin{equation*}
    \lim_{k\to \infty} \mcE[\rho_k] = \lim_{k\to \infty}\left( \mcE[\rho_k^1]+\mcE[\rho_k^2] \right).
\end{equation*}
We then apply concentration compactness principle to $\{ \rho_k^1 \}$ and $\{ \rho_k^2\},$ noting that neither can have a vanishing subsequence. Applying Lemmas \ref{lem:tight->min} and \ref{lem:dichotomy->two pieces}, and then concentration again iteratively, we eventually obtain (after passing to subsequences, and some relabelling) sequences $\rho_k^1,\rho_k^2,\dots \rho_k^l,$ which are tight up to translation with $\rho_k^i\in \mcA_{\alpha_i},$ $\alpha_1+\dots+\alpha_l=m,$ and
\begin{equation*}
    \lim_{k\to \infty} \mcE[\rho_k] = \lim_{k\to \infty}\left( \mcE[\rho_k^1]+\dots+\mcE[\rho_k^l] \right).
\end{equation*}
Note that this process of applying concentration compactness iteratively must end after finitely many steps, because the $\alpha_i$'s cannot become arbitrarily small. This is because by Lemma \ref{lem:smallmassballmin} $\inf_{\rho\in\mcA_{\alpha}}\mcE[\rho]$ is attained by the indicator of a ball if $\alpha \leq m_0.$ So for any $\alpha \leq m_0$, and any dichotomous sequence $\{\eta_k\}$ in $\mcA_{\alpha},$ we would have
\begin{equation*}
    \liminf_{k\to \infty} \mcE[\eta_k]>\inf_{\rho \in \mcA_{\alpha}}\mcE[\rho],
\end{equation*}
which contradicts Lemma \ref{lem:dichotomy->two pieces}.
To conclude, by Lemma \ref{lem:tight->min} there exist $\rho^1\in\mcA_{\alpha_1},\dots\rho^l\in \mcA_{\alpha_l}$ such that 
\begin{equation*}
    \lim_{k\to \infty} \mcE[\rho_k] = \mcE[\rho^1] + \dots +\mcE[\rho^l].
\end{equation*}
Each $\rho^i$ is a minimizer of $\mcE$ in $\mcA_{\alpha_i}$, and so by Lemma \ref{lem:mins have compact support}, they each have compact support. So, we can construct a minimizer for the truncated problem by letting $\rho=\rho(z_1+\cdot)+ \dots \rho^l(z_l+\cdot)$ for suitably chosen $z_1,\dots ,z_l\in\mathbb{R}^N$.

Finally, assume additionally that $K$ is strictly increasing on $[0,a],$ and $a+w\leq W-2w$. Let $\rho$ be any minimizing density. By Lemma $\ref{lem:separation},$ any two points $x,y$ points in the support of $\rho$ satisfy the distance condition $|x-y|\notin [a+w,a+W-w].$ Then since $a+w\leq W-2w=(a+W-w)-(a+w),$ $\supp{\rho}$ can be decomposed into pieces each with diameter at most $a+w$. On each of these pieces, the energy is uniquely minimized (up to translations and modifications on sets of measure zero) by the indicator of a ball. 
\end{proof}

\subsection{Proof of Theorem \ref{thm:inf is many balls}}\label{section:gen min}

The following notion of a generalized minimizer comes from \cite{Knupfer_Muratov_Novaga_2016}. It allows us to deal with the fact that minimizing sequences may have many pieces that become infinitely far apart.
\begin{defn}
A \textbf{generalized minimizer} of $\mcE$ in $\mcA_m$ is a collection of densities  $(\rho_1,\dots,\rho_M),$ for some $M\in \mathbb{N},$ and each $\rho_i \in \mcA_{m_i}$ is a minimizer for $\mcE$ in $\mcA_{m_i}.$ Additionally, $\sum_{i=1}^M m_i=m,$ and 
\begin{equation}
    \inf_{\rho\in \mcA_m}\mcE[\rho]=\sum_{i=1}^M \mcE[\rho_i].
\end{equation}
\end{defn}
Let $\overbar{K}=K\cdot \mathds{1}_{[0,a+W]},$ and denote 
\begin{equation}
    \overbar{\mcE}[\rho]=\int_{\mathbb{R}^N} \int_{\mathbb{R}^N} \overbar{K}(|x-y|)\rho(x)\rho(y) \, dx \, dy.
\end{equation}

\begin{thm}[Existence of Generalized Minimizers]\label{thm:exist gen} Let $K:[0,\infty) \to \mathbb{R}$ be a bounded kernel satisfying $\mathbf{(K1)-(K5)}$. 
Further, assume $K$ is strictly increasing on $[0,a]$ and $a+w\leq W-2w$. then, generalized minimizers exist and up to sets of measure zero any generalized minimizer has the form \begin{equation}
    \rho=(\mathds{1}_{B(x_i,r_i)})_{i=1}^k
\end{equation}
for some centres $x_i\in \mathbb{R}^N$ and radii $r_i>0$ such that $\mcE[\mathds{1}_{B(x_i,r_i)},\mathds{1}_{B(x_j,r_j)}]=0$ for $i\neq j$.

\end{thm}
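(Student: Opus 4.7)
The plan is to reduce to the compactly supported setting via truncation. Let $\overline{K} = K \cdot \mathds{1}_{[0, a+W]}$. Then $\overline{K}$ inherits (K1)--(K5), the strict monotonicity on $[0, a]$, and the assumption $a+w \leq W-2w$; moreover, $\overline{K}$ vanishes outside $[0, a+W]$. Theorem~\ref{thm:exist} applies and yields a minimizer $\overline{\rho} = \sum_{i=1}^{k} \mathds{1}_{B(x_i, r_i)} \in \mcA_m$ of $\overline{\mcE}$ with $\overline{\mcE}[\mathds{1}_{B(x_i, r_i)}, \mathds{1}_{B(x_j, r_j)}] = 0$ for $i \neq j$. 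The separation argument in the proof of Theorem~\ref{thm:exist} bounds each ball's diameter by $a+w < a+W$, so $\overline{K}$ and $K$ agree on all pairwise distances inside a single ball, giving $\mcE[\mathds{1}_{B(x_i, r_i)}] = \overline{\mcE}[\mathds{1}_{B(x_i, r_i)}]$.

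Next I would establish that $E(m) = \overline{E}(m)$. Since $K \geq \overline{K}$ pointwise (they agree on $[0, a+W]$, and $K \geq 0 = \overline{K}$ on $(a+W, \infty)$ by (K3)), $\mcE[\rho] \geq \overline{\mcE}[\rho]$ on nonnegative densities, so $E(m) \geq \overline{E}(m)$. For the reverse direction, consider the translated family
\[
    \rho^{(n)} = \sum_{i=1}^{k} \mathds{1}_{B(x_i + n v_i, r_i)} \in \mcA_m,
\]
with the unit vectors $v_i$ chosen pairwise distinct (so that for large $n$ the shifted balls are disjoint). The self-energies are translation invariant, and the pairwise distances between different balls grow like $n$, so boundedness of $K$ and the decay $K(r) \to 0$ force the cross-interactions to vanish as $n \to \infty$. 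Hence $\mcE[\rho^{(n)}] \to \sum_i \mcE[\mathds{1}_{B(x_i, r_i)}] = \overline{E}(m)$, giving $E(m) \leq \overline{E}(m)$.

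Setting $m_i = |B(x_i, r_i)|$, iterated subadditivity (Lemma~\ref{lem:subadd}) gives
\[
    E(m) \leq \sum_{i=1}^{k} E(m_i) \leq \sum_{i=1}^{k} \mcE[\mathds{1}_{B(x_i, r_i)}] = E(m),
\]
so every inequality is tight and $E(m_i) = \mcE[\mathds{1}_{B(x_i, r_i)}]$. Thus each $\mathds{1}_{B(x_i, r_i)}$ is a minimizer of $\mcE$ in $\mcA_{m_i}$, and $(\mathds{1}_{B(x_i, r_i)})_{i=1}^{k}$ satisfies the definition of a generalized minimizer.

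For the characterization of arbitrary generalized minimizers $(\rho_1, \ldots, \rho_M)$, each $\rho_j$ is a genuine minimizer of $\mcE$ in its own mass class, so Lemma~\ref{lem:mins have compact support} gives compact support, Lemma~\ref{nec-cond} supplies a Lagrange multiplier $\lambda_j < 0$, and Lemma~\ref{lem:separation} forbids pairwise distances in $[a+w, a+W-w]$ within $\supp(\rho_j)$. Since $a+w \leq W-2w$, the support breaks into pieces of diameter at most $a+w$, and strict monotonicity of $K$ on $[0, a]$ combined with Riesz rearrangement forces each piece to be a ball (up to null sets). Refining the tuple by extracting each ball as its own component preserves the generalized-minimizer property (by the same subadditivity chain as above applied to each $\rho_j$) and yields the claimed form. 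The main obstacle is Step~2, matching $E(m) = \overline{E}(m)$ via the translated construction; a secondary subtlety is the interpretation of the non-interaction condition $\mcE[\mathds{1}_{B(x_i, r_i)}, \mathds{1}_{B(x_j, r_j)}] = 0$ in the tuple framework, which reflects exactly that the generalized-minimizer formalism is needed precisely because the pieces want to escape to infinity from one another.
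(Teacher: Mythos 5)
Your proof follows the same truncation strategy as the paper (pass to $\overline{K}=K\cdot\mathds 1_{[0,a+W]}$, invoke Theorem~\ref{thm:exist}, then translate the balls to infinity to match $E(m)$ with $\overline E(m)$), but it is noticeably more complete. The paper's proof stops at ``by taking $\rho_i=\mathds 1_{B(x_i,r_i)}$, we obtain a generalized minimizer'' without verifying the defining property that each $\rho_i$ is a genuine minimizer of $\mcE$ in $\mcA_{m_i}$; your iterated-subadditivity chain
\[
E(m)\le\sum_i E(m_i)\le\sum_i\mcE[\mathds 1_{B(x_i,r_i)}]=\overline E(m)=E(m)
\]
forces equality throughout and supplies exactly this missing verification. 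It also depends on noticing that $\mcE[\mathds 1_{B(x_i,r_i)}]=\overline\mcE[\mathds 1_{B(x_i,r_i)}]$ because each ball has diameter $\le a+w<a+W$ --- a fact the paper uses implicitly. Finally, the paper does not argue the second half of the theorem (that \emph{any} generalized minimizer has the stated form) at all; you correctly observe that each component $\rho_j$ is a compactly supported $L^1$-local minimizer in its own mass class, so Lemmas~\ref{nec-cond}, \ref{lem:mins have compact support}, and \ref{lem:separation} apply verbatim and the argument from the end of Theorem~\ref{thm:exist} characterizes each $\rho_j$ as a union of non-interacting balls, after which the tuple can be refined componentwise. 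In short: same route, but you actually close the two gaps (each piece is a minimizer; arbitrary generalized minimizers have the claimed form) that the paper glosses over.
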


\begin{proof}
    Let $\rho\in \mcA_m$ be a minimizer for the truncated problem. Note that $\overbar{K} \leq K,$ so $\overbar{\mcE}[\eta]\leq \mcE[\eta],$ for any $\eta \in \mcA_m.$ 
    
    By (\ref{eqn:trunc min decomp}), we may write \begin{equation}
        \rho=\sum_{i=1}^k \mathds{1}_{B(x_i,r_i)},
    \end{equation}
    for some centres $x_i\in \mathbb{R}^N$ and radii $r_i>0$ such that $\mcE[\mathds{1}_{B(x_i,r_i)},\mathds{1}_{B(x_j,r_j)}]=0$ for $i\neq j$.
    From this, we may construct a minimizing sequence for $\mcE$, by picking new centers $x_{i,l}$ so that $|x_{i,l}-x_{j,l}|\to \infty$ as $k\to \infty$. So,
    \begin{equation*}
        \inf_{\rho\in \mcA_m}\mcE[\rho]=\inf_{\rho \in \mcA_m} \overbar{\mcE}[\rho].
    \end{equation*}
    So, by taking $\rho_i=\mathds{1}_{B(x_i,r_i)}$, we obtain a generalized minimizer for $\rho$
\end{proof}

\subsection{Linear Growth}\label{subsec:lingrowth}

We will show that the minimal energy
\begin{equation*}
    E(m)=\inf_{\rho \in \mcA_m} \mcE[\rho]
\end{equation*}
grows linearly in $m$.
\begin{thm} Let $K:[0,\infty) \to \mathbb{R}$ be a bounded kernel satisfying $\mathbf{(K1)-(K5)}$. 
Further, assume $K$ is strictly increasing on $[0,a]$ and $a+w\leq W-2w$. For each $m>0,$ let 
$E(m)=\inf_{\rho \in \mcA_m} \mcE[\rho]$
and $g(m)$ be the energy of the ball with mass $m$. Then
\begin{equation}  \label{eqn:linear growth}
    \lim_{m\to \infty} \frac{E(m)}{m} = \min_{m>0}\frac{g(m)}{m}.
\end{equation}
\end{thm}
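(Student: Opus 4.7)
The plan is to prove matching upper and lower bounds for $E(m)/m$, with $c := \inf_{m>0} g(m)/m$.

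For the lower bound, I invoke Theorem \ref{thm:inf is many balls} to write
\[
E(m) = \sum_{i=1}^k g(\tilde m_i), \qquad \tilde m_i := |B(0,r_i)|, \qquad \sum_{i=1}^k \tilde m_i = m.
\]
Since each ratio satisfies $g(\tilde m_i)/\tilde m_i \ge c$ by definition of $c$, summing gives $E(m) \ge c\,m$, hence $E(m)/m \ge c$ for every $m>0$.

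For the upper bound, fix any $m_* > 0$ and for large $m$ write $m = k m_* + r$ with $k = \lfloor m/m_*\rfloor$ and $r \in [0, m_*)$. Iterated application of the subadditivity of $E$ (Lemma \ref{lem:subadd}), together with the trivial admissibility bound $E(s) \le g(s)$ (a single ball of mass $s$ lies in $\mcA_s$), yields
\[
E(m) \le k\,E(m_*) + E(r) \le k\,g(m_*) + g(r).
\]
Dividing by $m$ and sending $m\to\infty$ with $m_*$ fixed, $k/m \to 1/m_*$ and $g(r)$ is uniformly bounded on $[0,m_*]$ by continuity of $g$, so $\limsup_{m\to\infty} E(m)/m \le g(m_*)/m_*$. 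Taking the infimum over $m_* > 0$ gives $\limsup E(m)/m \le c$, and combined with the lower bound this yields $\lim_{m\to\infty} E(m)/m = c$.

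To upgrade $\inf$ to $\min$ in the statement, it remains to verify the infimum is attained in the interior of $(0,\infty)$. The map $m \mapsto g(m)/m$ is continuous, satisfies $g(m)/m \to 0$ as $m \to 0^+$ (from the estimate $|g(m)| \le \|K\|_\infty m^2$), and stays bounded below in the tail: a layer-cake rewriting $g(m) = \int K(|z|)\,|B_R \cap (B_R - z)|\,dz$ (with $|B_R|=m$) shows $g(m)/m$ approaches $\int_{\mathbb{R}^N} K(|z|)\,dz$ for integrable $K$, where assumptions (K4) and (K5) ensure the positive barrier contribution dominates the negative well contribution so that this limit is nonnegative. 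Since $g(m)/m < 0$ whenever $m$ is small enough that the corresponding ball has diameter below $w$ (so every pair lies in the attractive well), the infimum is strictly negative and hence attained at some interior $m_* \in (0,\infty)$.

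The main obstacle is this last attainment argument: the matched bounds themselves fall out almost immediately from the two earlier theorems, whereas pinning down the asymptotic behaviour of $g(m)/m$ as $m \to \infty$ — and relating the relevant layer-cake integral to the structural hypotheses (K1)--(K5) — is where the real work lies.
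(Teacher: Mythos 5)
Your proof is correct and follows essentially the same route as the paper: the lower bound comes from the droplet decomposition of Theorem \ref{thm:inf is many balls}, the upper bound from subadditivity plus $E\le g$ (your iterated subadditivity is exactly the content of Fekete's lemma that the paper cites), and the attainment from continuity of $g(m)/m$ together with its endpoint behaviour. One small remark on the attainment step: the paper simply asserts that $g$ is positive for large $m$, while you justify it via the autocorrelation identity and \textbf{(K4)}--\textbf{(K5)}; this is a genuine improvement in completeness, and your ``for integrable $K$'' caveat is unnecessary since when $K^+$ is not integrable Fatou gives $g(m)/m\to+\infty$, which is still nonnegative.
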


\begin{proof}
The first step is to show that the limit in the left hand side of (\ref{eqn:linear growth}) exists. By Fekete's subadditive Lemma (see \cite{Kuczma} Theorem 16.2.9) we have
\begin{equation*}
    \lim_{m\to \infty} \frac{E(m)}{m} = \inf_{m>0} \frac{E(m)}{m}.
\end{equation*}
Next, by Theorem \ref{thm:inf is many balls} given any $m>0$ we may write $E(m)=g(m_1)+g(m_2)+ \dots +g(m_k)$ for some positive numbers $m_1,\dots m_k$ such that $m_1+\dots+m_k=m.$ Then,
\begin{equation*}
    \frac{E(m)}{m}=\frac{g(m_1)+\dots + g(m_k)}{m} \geq \frac{(m_1+ \dots +m_k)\inf_{m>0}\frac{g(m)}{m}}{m}=\inf_{m>0}\frac{g(m)}{m}
\end{equation*}
On the other hand, $E(m)\leq g(m)$ for all $m$, so 
\begin{equation*}
    \inf_{m>0}\frac{E(m)}{m} =\inf_{m>0} \frac{g(m)}{m}.
\end{equation*}
To conclude we need to show that the infimum on the right is actually a minimum. Note that $g$ is continuous on $(0,\infty),$ and is positive for large values of $m$. Also, since $K(r)\geq K(0)$ for all $r>0,$
\begin{equation*}
    g(m) \geq K(0)m^2.
\end{equation*}
Then, using the squeeze theorem, and the fact that $g$ is negative for small values of $m,$
\begin{equation*}
    \lim_{m \to 0^+} \frac{g(m)}{m}=0.
\end{equation*}

\end{proof}

\section{Size of droplets}\label{sec:sizeofdrop}

Theorem \ref{thm:exist gen} says that generalized minimizers are tuples of indicators of balls. The linear growth estimate (\ref{eqn:linear growth}) suggests that when $m$ is larger enough, the droplets should all be relatively close in size. In this section we will consider kernels that are power laws near $0,$ to further explore the question of droplet size. We will see that generalized minimizers for such kernels consist of many indicators of balls of one size, and possibly one ball of a smaller size. 

Let
\begin{equation}
    K(r)=
        \left\{
        \begin{array}{lr}
        r^p-d, & \; \text{ if } 0\leq r \leq a,\\
        f(r), & \; \text{ if } r>a,
        \end{array} 
        \right.
\end{equation}
where $f:(a,\infty) \to \mathbb{R}$ is a non-negative function such that $f(r)\geq a^p+d$ for all $a<r\leq w+a$ and such that $\lim_{r\to \infty}f(r)=0$. To understand generalized minimizers for such a kernel, we first compare the indicator of one ball versus two balls.

\begin{prop}\label{lem:1 ball vs 2}
Let $g(m)=\mcE[\mathds{1}_{B_{r(m)}}]$, where $r(m)$ is the radius of the ball with measure $m$, and set $f(t)=g(tm)+g((1-t)m)$ for $0\leq t \leq 1/2$. Then, there are numbers $0<m_0<m_1$ such that
\begin{enumerate}
    \item If $0<m\leq m_0,$ then $f$ is minimized at $t=0$
    \item If $m_0<m<m_1$ $f$ is minimized at some $0<t_0<1/2$
    \item If $m\geq m_1$, then $f$ is minimized at $t=1/2.$
\end{enumerate}
Moreover, 
\begin{align}
    m_0=&\left(\frac{2d}{C_{n,p}(2+p/n)}  \right)^{n/p}, \text{ and} \\
    m_1=&\left(\frac{2^{1+p/n}d}{C_{n,p}(2+p/n)(1+p/n)} \right)^{n/p} ,
\end{align}
where
\begin{equation*}
    C_{n,p}=\frac{1}{|B_1|^{2+p/n}}\int_{B_1}\int_{B_1} |x-y|^p \, dx \, dy.
\end{equation*}
\end{prop}

\begin{proof}
Let $m>0$ and $r>0$ such that $m=|B_r|.$ Then,
\begin{align*}
    g(m)= \mcE[\mathds{1}_{B_r}] & = \int_{B_r}\int_{B_r} \left( |x-y|^p-d\right)\, dx \, dy \\
    &= r^{2n+p}\int_{B_1}\int_{B_1} |x-y|^p \, dx \, dy \, -dm^2 \\
    &= C_{n,p} m^{2+p/n}-dm^2.
\end{align*}
We seek to minimize $f(t)=g(tm)+g((1-t)m)$ for $t\in [0,1/2].$ Compute
\begin{align*}
    f'(t)= & m^2 \left(C_{n,p} (2+p/n)m^{p/n}(t^{1+p/n}-(1-t)^{1+p/n}) -2d(2t-1) \right) \\
    f''(t) = & m^2 \left( C_{n,p}(2+p/n)(1+p/n) m^{p/n}(t^{p/n}+(1-t)^{p/n})-4d \right)
\end{align*}
It is straightforward to check that $t\mapsto t^{p/n}+(1-t)^{p/n}$ is decreasing on $[0,1/2]$ if $p/n>1$. So, $f''$ is decreasing, which means $f'$ is concave. Note that $f'(1/2)=0,$ so $f$ can have at most one critical value in the interval (0,1/2). To determine whether such a critical value exists, we need to look at the signs of $f'(0)$ and $f''(1/2)$. 
\begin{align*}
    f'(0)=& m^2C_{n,p}(2+p/n)(m_0^{p/n}-m^{p/n}), \\
    f''(1/2)=& 2^{1-p/n}m^2C_{n,p}(2+p/n)(1+p/n)(m^{p/n}-m_1^{p/n}).
\end{align*}
Let $m_0$ and $m_1$ be as defined in Lemma $\ref{lem:1 ball vs 2}$. Then,
\begin{enumerate}
    \item If $0<m\leq m_0,$ then $f'(0)\geq 0$, and so $f$ has no critical values in $(0,1/2).$ Moreover, $f'(t)\geq0$ for all $t\in [0,1/2].$ So, $f$ has its minimum at $t=0.$
    \item If $m_0<m<m_1,$ then $f'(0)<0$ and $f''(1/2)<0.$ Then there is a point $0<t_0<1/2$ where $f(t_0)=0.$ Moreover, $f'(t)<0$ on $(0,t_0)$ and $f'(t)>0$ on $(t_0,1/2)$. So, $f$ has its minimum at $t=t_0.$
    \item If $m\geq m_1,$ then $f''(1/2)\geq 0$, which means $f'(t)\leq 0$ for all $t\in [0,1/2].$ Thus $f$ has its minimum at $t=1/2$
\end{enumerate}
Note, it is straightforward to check that $m_0<m_1$

\end{proof}

\begin{lem}
Let $m>0$ and $(\rho_i)_{i=1}^k$ be a generalized minimizer. Then there exist radii $r_1,r_2> 0$ such that each $\rho_i=\mathds{1}_{B_{r_1}}$ or $\mathds{1}_{B_{r_2}}$.
\end{lem}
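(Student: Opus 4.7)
The plan is to reduce the claim to a finite-dimensional constrained minimization over the masses of the balls making up the generalized minimizer, and then to exploit the explicit form of the one-ball energy function computed in Lemma \ref{lem:1 ball vs 2}, namely $g(m) = C_{n,p}\, m^{2+p/n} - d\, m^2$.

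First, by Theorem \ref{thm:exist gen} each piece takes the form $\rho_i = \mathds{1}_{B(x_i,r_i)}$, so writing $m_i = |B(x_i,r_i)|$ the generalized-minimizer identity reads $\sum_i g(m_i) = E(m)$. Conversely, for any tuple of positive masses $(\tilde m_j)_{j=1}^{k'}$ with $\sum_j \tilde m_j = m$, placing disjoint balls of those masses with centres tending to infinity produces a sequence in $\mcA_m$ whose energy tends to $\sum_j g(\tilde m_j)$, using that $K(r) \to 0$ as $r \to \infty$. This yields $E(m) \leq \sum_j g(\tilde m_j)$, so $(m_1,\dots,m_k)$ actually minimizes $\sum g(\cdot)$ over all decompositions of $m$ into positive summands.

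Next, I would derive a Lagrange-type first-order condition. For any pair $i \neq j$ and small $\epsilon$, the perturbation $m_i \mapsto m_i - \epsilon$, $m_j \mapsto m_j + \epsilon$ (with the other masses unchanged) stays admissible, and the smooth function $\epsilon \mapsto g(m_i - \epsilon) + g(m_j + \epsilon)$ has a local minimum at $\epsilon = 0$. Setting its derivative to zero gives $g'(m_i) = g'(m_j)$, so there is some $\lambda \in \mathbb{R}$ with $g'(m_i) = \lambda$ for every $i$. A direct computation yields
\[
    g''(m) = C_{n,p}(2+p/n)(1+p/n)\, m^{p/n} - 2d,
\]
which is strictly increasing in $m>0$ and hence has at most one zero. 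Consequently $g'$ is strictly monotonic on each side of its (unique) critical point, so the level set $\{m>0 : g'(m)=\lambda\}$ contains at most two elements. This forces $\{m_1,\dots,m_k\}$ to contain at most two distinct values, and since the radius is determined by the mass through $m_i = |B_1|\, r_i^n$, the conclusion follows with $r_1, r_2$ the radii corresponding to those (at most) two masses.

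The main subtlety lies in the first paragraph: the definition of a generalized minimizer only requires each piece to minimize in its own mass class, not a priori that the tuple of masses be jointly optimal. The separated-balls approximation, combined with Theorem \ref{thm:inf is many balls}, closes this gap by identifying $E(m)$ with the infimum of $\sum g(\tilde m_j)$ over all positive decompositions of $m$; once that is in hand, the remainder is routine calculus on $g$.
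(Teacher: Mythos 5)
Your proof is correct and follows essentially the same route as the paper's: reduce to minimizing $\sum_i g(m_i)$ over mass partitions, extract the first-order condition $g'(m_i)=g'(m_j)$ from a pairwise perturbation, and observe that $g'$ is strictly decreasing then increasing so each level set of $g'$ has at most two points. The only difference is that you are explicit about why the mass tuple of a generalized minimizer is jointly optimal (via subadditivity and the separated-balls construction), a point the paper leaves implicit when it passes directly to the finite-dimensional minimization problem.
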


\begin{proof}
Let $m>0$ $(\rho_i)_{i=1}^k$ be a generalized minimizer. Consider the minimization problem
\begin{equation}\label{eqn:min k balls}
    \min_{\substack{t_1+\dots +t_{k-1}< 1  \\
     t_i > 0}} g(t_1m)+ \dots g(t_{k-1}m) + g((1-t_1-\dots-t_{k-1})m).
\end{equation}
Then, $(\rho_i)_{i=1}^k$ corresponds to a minimizer $(t_1, \dots, t_{k-1})$ of (\ref{eqn:min k balls}). Computing the gradient yields
\begin{equation*}
    mg'(t_im)-mg'((1-t_1-\dots-t_
    {k-1})m)=0
\end{equation*}
for each $i=1,\dots,{k-1}.$ In particular this means that
\begin{equation*}
    g'(t_im)=g'(t_jm)
\end{equation*}
for each $i\neq j.$
Note that $g'$ strictly decreasing then increasing on $[0,\infty),$ so for any $y\in \mathbb{R}$ the equation $g'(x)=y$ has at most two solutions. 

\end{proof}

Note that the only thing we used was that $g$ is concave then convex so this argument applies more generally than to just the power law kernels.

\begin{thm}\label{lem:power law one small ball}
Generalized minimizers have the form $(\mathds{1}_{B_s},\mathds{1}_{B_r},\dots\mathds{1}_{B_r})$, for some $s \leq r.$ 
\end{thm}
\begin{proof}
In the previous Lemma we saw that generalized minimizers will consist of indicators of balls having at most two different sizes. 

Let $m_0$ and $m_1$ be the thresholds from Lemma \ref{lem:1 ball vs 2}. 

The claim follows by induction on the number of balls in a generalized minimizer. First, for a given $m>0$
assume that a generalized minimizer consists of four balls, two of which have mass $M,$ and the other two have size $N$. Then, by Lemma \ref{lem:1 ball vs 2} $2M\geq m_1,$ $2N\geq m_1,$ and $M+N\leq m_1$. So,
\begin{equation*}
    2m_1\leq 2M+2N \leq 2m_1,
\end{equation*}
 which means $M=N.$ 
 
 For the inductive step, assume the claim is true for generalized minimizers that consist of $k$ balls. Then, let $(\rho_i)_{i=1}^{k+1}$ be a generalized minimizer that consists of $k+1$ indicators balls, $k_M>1$ of mass $M$ and $k_N$ of mass $N.$ Then, removing on of the balls of mass $M$ will give a generalized minimizer for the problem with with smaller mass, and so $k_N=1.$

 To see that the single ball must have smaller mass, consider a generalized minimizer $(\mathds{1}_{B_s},\mathds{1}_{B_r},\dots\mathds{1}_{B_r}),$ for some $s\neq r$. By Lemma \ref{lem:1 ball vs 2}, $|B_s|+|B_r|<m_1$ and $2|B_r|\geq m_1,$ so $|B_s|<|B_r|.$
\end{proof}

\bibliographystyle{amsalpha}
\bibliography{ref}

\end{document}